\documentclass[12pt]{amsart}
\usepackage[utf8]{inputenc}
\usepackage{amsfonts}
\usepackage{amsthm}
\usepackage{amsmath}
\usepackage{amssymb}
\usepackage{fullpage}
\usepackage{mathtools}
\usepackage{tikz}
\usepackage{enumerate} 
\usepackage{float}

\usepackage[hidelinks]{hyperref}

\newtheorem{teo}{Theorem}[section]

\newtheorem{cor}[teo]{Corollary}
\newtheorem{lem}[teo]{Lemma}
\newtheorem{rem}[teo]{Remark}
\newtheorem{prop}[teo]{Proposition}

\newtheorem{ex}[teo]{Example}
\newtheorem{question}[teo]{Question}
\newcommand{\ignore}[1]{}

\newcommand{\Hom}{\operatorname{\hom}}
\newcommand{\N}{\mathbb{N}}

\newcommand{\Rec}{\mathcal{R}}

\begin{document}
\title{Minimal reconstructions of a coloring}
\author{Diego Gamboa}
\address{Escuela de Matem\'aticas, Universidad Industrial de Santander, C.P. 680001, Bucaramanga - Colombia.}
\email{dgambh@gmail.com}
\author{Carlos Uzc\'{a}tegui-Aylwin}
\address{Escuela de Matem\'aticas, Universidad Industrial de Santander, C.P. 680001, Bucaramanga - Colombia.}
\email{cuzcatea@saber.uis.edu.co}
\date{}

\begin{abstract}
A coloring on  a finite or countable set $X$ is a function $\varphi: [X]^{2} \to \{0,1\}$, where $[X]^{2}$ is the collection of unordered pairs of $X$. The collection  of homogeneous sets for $\varphi$, denoted  by $\Hom(\varphi)$, consist of all $H \subseteq X$ such that $\varphi$ is constant on $[H]^2$; clearly, $\Hom(\varphi) = \Hom(1-\varphi)$. A coloring  $\varphi$ is \textit{reconstructible} up to complementation from its homogeneous sets if, for any coloring $\psi$ on $X$ such that $\Hom(\varphi) = \Hom(\psi)$, either $\psi = \varphi$ or $\psi = 1-\varphi$. By $\mathcal{R}$ we denote the collection of all colorings reconstructible from their homogeneous sets.  Let $\varphi$ and $\psi$ be colorings on $X$, and set
\[
D(\varphi, \psi) = \{ \{x,y\} \in [X]^2: \; \psi\{x,y\} \neq \varphi\{x,y\}\}.
\]
If $\varphi\not\in \mathcal{R}$, let
\[
r(\varphi) = \min\{|D(\varphi, \psi)|: \; \Hom(\varphi) = \Hom(\psi), \, \psi \neq \varphi, \, \psi \neq 1-\varphi\}.
\]
A coloring $\psi$ such that $\Hom(\varphi)=\Hom(\psi)$,  $\varphi\neq \psi$ and $1-\varphi\neq \psi$ is called a  {\em non trivial reconstruction} of $\varphi$. If, in addition, $r(\varphi) =|D(\varphi, \psi)|$, we call $\psi$  a {\em minimal reconstruction} of $\varphi$. 
The purpose of this article is to study the minimal reconstructions of a coloring. 
We show that, for large enough $X$, $r(\varphi)$ can only takes the values $1$ or  $4$. 
\end{abstract}

\subjclass[2020]{Primary 05D10; Secondary 05C15}
\keywords{Graph reconstruction, coloring of pairs, boolean sum of graphs.}

\maketitle

\section{Introduction}

A coloring on  a finite or countable set $X$ is a function $\varphi: [X]^{2} \to \{0,1\}$, where $[X]^{2}$ is the collection of unordered pairs of $X$. The collection  of homogeneous sets for $\varphi$, denoted  by $\Hom(\varphi)$, consist of all $H \subseteq X$ such that $\varphi$ is constant on $[H]^2$; clearly, $\Hom(\varphi) = \Hom(1-\varphi)$. We say that $\varphi$ is \textit{reconstructible} up to complementation from its homogeneous sets if, for any coloring $\psi$ on $X$ such that $\Hom(\varphi) = \Hom(\psi)$, either $\psi = \varphi$ or $\psi = 1-\varphi$.  In the terminology of graphs, we are discussing graphs that can be reconstructed (up to complementation) from the collection of their cliques and independent sets.
In this paper we present some new results about  this  reconstruction problem, which  has been already analyzed in \cite{Dammak2019,  Pouzet2013,clari-uzca2023,Pouzetetal2011,Pouzetetal2016}.
The motivation of this line of research comes from the well known  Ulam’s reconstruction conjecture, still unsolved (see the survey \cite{bondy1991}).

A typical condition for being  a non-reconstructible coloring is the presence of a pair $\{x, y\}$ such that $\varphi(\{x,z\}) = 1-\varphi(\{y,z\})$ for all $z \in X \setminus \{x, y\}$. Such pairs are called \textit{critical}. Indeed, suppose $\{x,y\}$ is a critical pair for $\varphi$ and let $\psi$ be the coloring defined by $\psi\{a,b\}=\varphi\{a,b\}$ for all $\{a,b\}\neq \{x,y\}$ and  $\psi\{x,y\}=1-\varphi\{x,y\}$, then $\Hom(\psi)=\Hom(\varphi)$ and thus $\varphi$ is not reconstructible. 

We denote by $\mathcal{R}$ the collection of all colorings reconstructible up to complementation from their homogeneous sets and by $\neg \mathcal{R}$ those which are not reconstructible. 
Let $\varphi$ and $\psi$ be colorings on $X$, and set
\[
D(\varphi, \psi) = \{ \{x,y\} \in [X]^2: \; \psi\{x,y\} \neq \varphi\{x,y\}\}.
\]
Consider the function $r:\neg\mathcal{R} \to \mathbb{N} \cup \{\infty\}$ defined in \cite{clari-uzca2023} by
\[
r(\varphi) = \min\{|D(\varphi, \psi)|: \; \Hom(\varphi) = \Hom(\psi), \, \psi \neq \varphi, \, \psi \neq 1-\varphi\}.
\]
A coloring $\psi$ such that $\Hom(\varphi)=\Hom(\psi)$,  $\varphi\neq \psi$ and $1-\varphi\neq \psi$ is called a  {\em non trivial reconstruction} of $\varphi$. If, in addition, $r(\varphi) =|D(\varphi, \psi)|$, we call $\psi$  a {\em minimal reconstruction} of $\varphi$. 
The main purpose of this article is to study the minimal reconstructions of a coloring. 

We know that $r(\varphi) = 1$ if and only if there is a critical pair for $\varphi$ and, moreover, $r(\varphi) \neq 2$ for all $\varphi \not\in \mathcal{R}$ (see \cite{clari-uzca2023}). One of our results is that, for $X$ large enough,  there are only two possibilities for $r(\varphi)$: Either $r(\varphi) = 1$ or $r(\varphi) = 4$ (see theorem \ref{teo:SumasMinimales}). We also isolate the typical coloring for which $r$ takes value 4, we call it a {\em critical cycle}.

A very simple but crucial observation for our arguments is that $\{x,y\} \in D(\varphi, \psi)$ if and only if $(\varphi + \psi)(\{x,y\}) = 1$, where $\varphi + \psi$ is the Boolean sum (addition mod 2). The idea of using Boolean sums comes from \cite{Dammak2019, Pouzet2013, Pouzetetal2011}. They showed that if two graphs $G$ and $G'$ (over an infinite set) have the same 3-element homogeneous sets, then the components of $G+G'$ (or of its complement) are either paths or cycles of even length \cite[Theorem 1.2]{Pouzetetal2011}. We present an alternative proof of their result (see Theorem \ref{teo:Pouzetetal}). The connection of these ideas with the possible values of the function $r$ is as follows. Suppose $\psi$ is a minimal reconstruction of $\varphi$, then $D_1(\varphi+\psi)=\{\{x,y\}\in [X]^2:\; (\varphi+\psi)(\{x,y\}=1\}$ is connected (see Lemma \ref{recon-comp}).
Thus we concentrate in studying  the connected components of  $D_1(\varphi+\psi)$.
We use the structure of the components to present a very precise description of the restriction of a coloring $\varphi$ to a component of $D_1(\varphi+\psi)$ where $\psi$ is any non-trivial reconstruction of $\varphi$. 

On the other hand, a sufficient condition for a coloring $\varphi$ to be reconstructible is that for all $F\subseteq X$ with $|F|= 4$ there are  $x,y\in X\setminus F$ such that  $\varphi\{x,z\}=\varphi\{y,z\}=\varphi\{x,y\}$ for all $z\in F\setminus \{x,y\}$ (see \cite[Prop. 3.5]{clari-uzca2023}). This type of colorings also satisfy that for every finite set $F$ there is another finite set $G\supseteq F$ such that the restriction of $\varphi$ to $[G]^2$ is  reconstructible. Thus, the natural question left in \cite{clari-uzca2023} was whether this is true for any coloring in $\mathcal{R}$; that is,  suppose  $\varphi\in \mathcal{R}$ and $F\subseteq X$ is a finite set, is there a finite set $G\supseteq F$ such that $\varphi\restriction [G]^2\in \mathcal{R}$? In the last section of the paper we answer negatively this question and present a new finitistic characterization of $\mathcal{R}$.

\ignore{ 
For completeness, in Section \ref{sec:main} we will give an alternative proof of this theorem taking advantage of the hypothesis of $X$ being countably infinite, the proof in \cite{Pouzetetal2011} is more general. This theorem gives more context and paints a more complete picture of what the main result of this article says.
}

\ignore{
Another result of this work that also answers (negatively) a question, in \cite{clari-uzca2023}, regarding a characterization of the property $\mathcal{R}$ for colorings on $\mathbb{N}$ in terms of their \textit{initial segments} and if the property $\mathcal{R}$ holds for them or not. The initial segments of a coloring are the restrictions $\{\varphi\upharpoonright [\{0,1,\dots, n-1\}]^2\}_{n\in\mathbb{N}}$. In \cite{clari-uzca2023}, it was shown that
\begin{prop}\label{prop:FRec}
    Let $\varphi\in 2^{[\mathbb{N}]^2}$. If there are infinite initial segments $\varphi\upharpoonright [\{0,1,\dots,n-1\}]^2$ for which property $\mathcal{R}$ holds then $\mathcal{R}$ holds for $\varphi$.
\end{prop}
    
The question was if the converse of Proposition \ref{prop:FRec} is also true. A construction of a counterexample is given, a coloring $\varphi$ such that $\varphi\upharpoonright [\{0,1,\dots,n-1\}]^2\in \neg \mathcal{R}$ for all $n\in\mathbb{N}$ is defined and it is shown that $\varphi \in \mathcal{R}$.
}

\section{Preliminaries}\label{sec:prelim}

We will use standard notation from Set Theory. Throughout the article, $X$ will denote an at most countable set. Given $k\in\N$,  $[X]^{k}$ denotes  the collection of all subsets of $X$ of size $k$. The notations $[X]^{\leq k}$ and $[X]^{\geq k}$ have the corresponding meanings. The collection of all mappings from $[X]^2$ into $\{0,1\}$ is denoted by $2^{[X]^2}$, and an element from this collection is called a {\em coloring (on $X$)}. Given $Y\subseteq X$ and $\varphi\in 2^{[X]^2}$, $\varphi\restriction Y$ denotes the restriction of $\varphi$ to $[Y]^2$. We make use of the following notation to refer to the subset of $[X]^2$ characterized by $\varphi$.
\begin{equation}
\begin{alignedat}{1}
    D_1(\varphi) &= \{\{x,y\}\in [X]^2\,:\, \varphi\{x,y\}=1\}.
\end{alignedat}    
\end{equation}

There is a straightforward correspondence between colorings and graphs. Indeed, given a graph $G=(X,E)$ and a coloring $\varphi:[X]^2\to 2$ they are regarded as being the same when for every $\{x,y\}\in [X]^2$
\[
\{x,y\} \in E \iff \varphi\{x,y\}=1.
\]
In other words, $E=D_1(\varphi)$.

We borrow some notation from graph theory.  Given a subset $Y$ of $X$ and $A \subseteq [X]^2$, we say that $G=(Y, [Y]^2 \cap A)$ is the \textit{graph induced by $Y$ on $A$} or that the set $Y$ {\em induces the graph $G$ on $A$}. Note that if $\varphi$ is a coloring on $X$, then the graph induced by some $Y\subseteq X$ on $D_1(\varphi)$ is $(Y, D_1(\varphi)\cap [Y]^2) = (Y, D_1(\varphi \restriction [Y]^2))$.

A \textit{path of length $n$ on a subset $A$ of $[X]^2$} is a subset $P=\{x_0, \ldots, x_n\}$ of $X$ that induces a graph $(P,E)$ with edge set
\begin{equation}
\label{eq:pathdef}
       E= \{\{x_k,x_{k+1}\}:k\in \{0,\ldots, n-1\}\} = [P]^2 \cap A  \neq \emptyset.
\end{equation}
For a coloring  $\varphi:[X]^2\to 2$, a {\em path for $\varphi$} is any path induced by some $Y\subseteq X$ on $D_i(\varphi)$ for some $i\in\{0,1\}$.

 A \textit{cycle of length $n+1$ on a subset $A$ of $[X]^2$} is a subset $Z=\{x_0,\ldots, x_n\}$ of $X$ that induces a graph $(Z,E)$ with edge set
\begin{equation}
\label{eq:cycle1}
E =\{\{x_k,x_{k+1}\}: k\in\{0,\dots, n-1\}\} \cup \{\{x_{n},x_0\}\} = [Z]^2 \cap A \neq \emptyset.
\end{equation}
A {\em cycle for $\varphi$} is any cycle induced by some $Y\subseteq X$ on $D_i(\varphi)$ for some $i\in \{0,1\}$.

The \textit{components of ($X$ on) $A\subseteq [X]^2$} are the usual connected components of the graph $(X,A)$, more precisely, the equivalence classes of the relation on $X$ given by
\begin{equation*}
    x \sim_{A}  y \iff\, \text{ there is a path from $x$ to $y$ in $A$.}
\end{equation*}
Note that any component has at least two elements, since otherwise there would be no path as defined above.
Given a coloring $\varphi$ we will often consider the components of $(X,D_i(\varphi+\psi)$ for some $i\in\{0,1\}$ (mostly $i=1$).

A {\em claw in $A$} is a graph $F=(V, E)$ where $V=\{x,y,z,w\}$ and
\begin{equation}\label{eq:claw}
    E = [\{x,y,z\}]^2 = [V]^2 \cap A
\end{equation}
(see figure \ref{fig:claw}). A \textit{claw for $\varphi$} is any claw induced by some $F\in [X]^4$ on $D_i(\varphi+\psi)$ for some $i\in \{0,1\}$.

\begin{figure}[h]
    \centering
    \begin{tikzpicture}[scale=1.5]
\node[above] (w) at (0,0) {\footnotesize$w$};
\node[below] (z) at (1,-1) {\footnotesize$z$};

\node[below] (y) at (0,-1) {\footnotesize$y$};
\node[below] (x) at (-1, -1) {\footnotesize$x$};

\bgroup
\tikzstyle{every node}=[circle, draw, fill=black,inner sep=0pt, minimum width=2.5pt]
\node (w) at (0,0) {};
\node (z) at (1,-1) {};

\node (y) at (0,-1) {};
\node (x) at (-1, -1) {};

\draw[thick] (x) -- (w) -- (y);
\draw[thick] (w) -- (z);
\draw[gray!50,thick] (x) -- (y) -- (z) to[out=-135, in=-45] (x);
\egroup

\end{tikzpicture}
    \caption{A claw.}
    \label{fig:claw}
\end{figure}

For each $A\subseteq [X]^2$ the degree of $x\in X$ with respect to $A$ is defined as follows:
\[
\deg_A(x)=\left|\{y\in X\setminus \{x\}\,:\, \{x,y\} \in A\}\right|.
\]

Let $\varphi:[X]^2\to 2$ be a coloring on $X$, we consider also the case   $X$  finite. The collection of  \textit{homogeneous sets for} $\varphi$, denoted $\Hom(\varphi)$, is the following:
\begin{equation}\label{eq:Hom}
    \Hom(\varphi)=\{H\in [X]^{\geq 3}\,:\, \varphi \upharpoonright [H]^2 =\{i\}, \text{ for some } i \in \{0,1\}\}.
\end{equation}
The well known Ramsey's theorem says that $\Hom(\varphi)$ is not empty whenever  $|X|\geq 6$. 
Additionally, we find it useful to distinguish homogeneous sets by their color
\begin{equation}\label{eq:Homi}
 \begin{alignedat}{2}
     \Hom_0(\varphi)&=\{ H\in \Hom(\varphi)\,:\, \varphi\upharpoonright [H]^2=0\}\\
     \Hom_1(\varphi)&=\{ H\in \Hom(\varphi)\,:\, \varphi\upharpoonright [H]^2=1\}
 \end{alignedat}    
\end{equation}  

We say that $\varphi,\psi\in 2^{[X]^2}$ are \textit{reconstructions of the other} if $\Hom(\varphi)=\Hom(\psi)$. Clearly $\varphi$ and $1-\varphi$ are reconstruction of $\varphi$, they are called {\em trivial reconstructions}.
The main results of this paper are about the equivalence classes of the relation $H$ defined by 
\begin{equation}
\begin{alignedat}{1}
    \varphi H \psi &\iff \Hom(\varphi)=\Hom(\psi).
\end{alignedat}    
\end{equation}
The following useful fact, taken from \cite{clari-uzca2023}, is easy to verify. 
$$
\begin{alignedat}{1}
\varphi H \psi & \iff \Hom(\varphi)\cap[X]^3=\Hom(\psi)\cap[X]^3.
\end{alignedat}    
$$
Our reconstruction problem is captured by  the following property (introduced in  \cite{clari-uzca2023}).
\begin{equation}
    \varphi \in\mathcal{R} \iff \forall\psi\, [ \varphi H \psi  \implies (\psi=\varphi \vee \psi=1-\varphi) ].
\end{equation} 
The colorings in $\mathcal{R}$ are those that can be reconstructed from its homogeneous sets in an (essentially) unique manner. 
The complement of $\mathcal{R}$ is denoted by $\neg \mathcal{R}$.

A \textit{critical pair for a coloring $\varphi$} is an edge $\{a,b\}\in [X]^{2}$ such that
\begin{equation}\label{eq:critpair}
\varphi\{a,x\} = 1-\varphi\{b,x\}, \text{ for all } x\in X\setminus\{a,b\}.    
\end{equation}

\begin{rem}
\label{rem-critical-pair}
Any  coloring with  a critical pair does not belong to $\mathcal{R}$ (\cite{clari-uzca2023}). Indeed, suppose that $\{a,b\}$ is critical for $\varphi$ and let $\psi$ be defined as $\psi\{x,y\}=\varphi\{x,y\}$ if $\{x,y\}\neq \{a,b\}$ and $\psi\{a,b\}=1-\varphi\{a,b\}$. It is easy to verify that  $\Hom(\varphi)=\Hom(\psi)$.
In figure \ref{fig:Pcrit} the pair $\{y,z\}$ is critical. Observe that the collection of its homogeneous sets does not change regardless of the color of $\{y,z\}$, thus this coloring is not in $\mathcal{R}$.  
\end{rem}

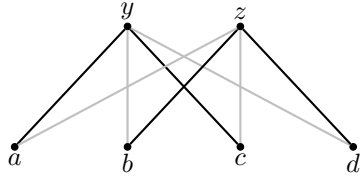
\begin{figure}[h!]
    \centering
    \begin{tikzpicture}[scale=1.5]
\node[above] (y) at (0,1) {\footnotesize$y$};
\node[above] (z) at (1,1) {\footnotesize$z$};

\node[below] (a) at (-1,0) {\footnotesize$a$};
\node[below] (b) at (0, 0) {\footnotesize$b$};
\node[below] (c) at (1,0) {\footnotesize $c$};
\node[below] (d) at (2,0) {\footnotesize $d$};

\bgroup
\tikzstyle{every node}=[circle, draw, fill=black,inner sep=0pt, minimum width=2.5pt]
\node[above] (y) at (0,1) {};
\node[above] (z) at (1,1) {};

\node[below] (a) at (-1,0) {};
\node[below] (b) at (0, 0) {};
\node[below] (c) at (1,0) {};
\node[below] (d) at (2,0) {};

\draw[thick] (c) -- (y) -- (a);
\draw[gray!50,thick] (d) -- (y) -- (b);
\draw[thick] (d) -- (z) -- (b);
\draw[gray!50,thick] (c) -- (z) -- (a);
\egroup

\end{tikzpicture}
    \caption{$\{y,z\}$ is a  critical pair.}
    \label{fig:Pcrit}
\end{figure}

A \textit{critical cycle} for a coloring $\varphi$ on $X$ is a set $Z = \{\{a,b\},\{b,c\},\{c,d\},\{d,a\}\}$ where $\{a,b,c,d\}\in [X]^4$ is such that 
\begin{equation}
\label{eq:critcycle1}
    \begin{alignedat}{2}
        \varphi\{a,c\}&=\varphi\{b,c\} = 1-\varphi\{a,b\},\\
        \varphi\{b,d\} &= \varphi\{c,d\} = 1-\varphi\{b,c\},\\
        \varphi\{c,a\} &= \varphi\{d,a\} = 1-\varphi\{c,d\}.
    \end{alignedat}
\end{equation}
and
\begin{equation}\label{eq:critcycle2}
   \varphi\{x,z\} \neq \varphi\{y,z\} \text{ for } \{x,y\} \in Z \text{ and } z\in X\setminus\{a,b,c,d\}. 
\end{equation}

For any $\{x,y\}\in [X]^2$,   define a set by 
\begin{equation}
\label{eq:criticos}
B_{\{x,y\}} = \{ z\in X\setminus\{x,y\}\,:\, \varphi\{x,z\}=\varphi\{y,z\} \}.
\end{equation}
Observe that if $\{x,y\}$ is a critical pair for $\varphi$, then $B_{\{x,y\}}$ is empty and if  
$\{x,y\}$ belongs to a critical cycle $Z = \{\{a,b\},\{b,c\},\{c,d\},\{d,a\}\}$, then $B_{\{x,y\}}$ has exactly one element, for instance, $B_{\{a,b\}}=\{c\}$.

\begin{rem}\label{rem:Ccrit}
    It is worth noting that from equation \eqref{eq:critcycle1} one can deduce
    \[\varphi\{d,b\}=\varphi\{a,b\}=1-\varphi\{d,a\}.\]

    Also, to avoid confusion, it is important to consider that the set $Z$ in itself might be a critical cycle by satisfying the equations in \eqref{eq:critcycle1} but also might be a critical cycle if it holds that
    \begin{equation}\label{eq:critcyclealt}
        \begin{alignedat}{2}
        \varphi\{b,d\}&=\varphi\{a,d\} = 1-\varphi\{a,b\},\\
        \varphi\{a,c\} &= \varphi\{d,c\} = 1-\varphi\{a,d\},\\
        \varphi\{d,b\} &= \varphi\{c,b\} = 1-\varphi\{d,c\}.
    \end{alignedat}
    \end{equation}
    So the same set $Z$ of four edges in $[X]^2$ might be a critical cycle in one of two ways. We will presuppose that equation \eqref{eq:critcycle1} is the condition that holds unless stated otherwise.
\end{rem}

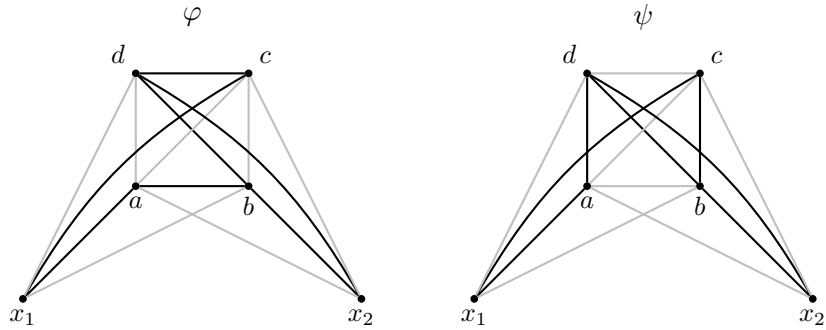
\begin{figure}[h]
\centering
\begin{tikzpicture}[scale=1.5]

\node (varphi) at (0.5, 1.5) {\small$\varphi$};
\node[above left] (d) at (0,1) {\footnotesize$d$};
\node[below] (a) at (0,0) {\footnotesize$a$};
\node[below] (b) at (1,0) {\footnotesize$b$};
\node[above right] (c) at (1,1) {\footnotesize$c$};

\node[below] (x1) at (-1,-1) {\footnotesize$x_1$};
\node[below] (x2) at (2,-1) {\footnotesize$x_2$};

\node (psi) at (4.5, 1.5) {\small$\psi$};
\node[above left] (d1) at (4,1) {\footnotesize$d$};
\node[below] (a1) at (4,0) {\footnotesize$a$};
\node[below] (b1) at (5,0) {\footnotesize$b$};
\node[above right] (c1) at (5,1) {\footnotesize$c$};

\node[below] (y1) at (3,-1) {\footnotesize$x_1$};
\node[below] (y2) at (6,-1) {\footnotesize$x_2$};

\bgroup
\tikzstyle{every node}=[circle, draw, fill=black,inner sep=0pt, minimum width=2.5pt]
\node (a) at (0,0) {};
\node (b) at (1,0) {};
\node (c) at (1,1) {};
\node (d) at (0,1) {};

\node (x1) at (-1,-1) {};
\node (x2) at (2,-1) {};

\node (a1) at (4,0) {};
\node (b1) at (5,0) {};
\node (c1) at (5,1) {};
\node (d1) at (4,1) {};

\node (y1) at (3,-1) {};
\node (y2) at (6,-1) {};

\draw[thick] (a) -- (b) -- (d) -- (c);
\draw[gray!50, thick] (d) -- (a) -- (c) -- (b);

\draw[thick] (c) to[out=210,in=60] (x1) -- (a);
\draw[thick, gray!50] (d) -- (x1) -- (b);
\draw[thick] (d) to[out=-30, in=120] (x2) -- (b);
\draw[thick, gray!50] (c) -- (x2) -- (a);

\draw[thick] (a1)-- (d1) -- (b1) -- (c1);
\draw[gray!50, thick] (b1) -- (a1) -- (c1) -- (d1);
\draw[thick] (c1) to[out=210,in=60] (y1) -- (a1);
\draw[thick, gray!50] (d1) -- (y1) -- (b1);
\draw[thick] (d1) to[out=-30,in=120] (y2) -- (b1);
\draw[thick, gray!50] (c1) -- (y2) -- (a1);
\egroup

\end{tikzpicture}
\caption{$\varphi$ and $\psi$ are colorings on 6 vertices for which the set $Z=\{\{a,b\},\{b,c\},\{c,d\},\{d,a\}\}$ is a critical cycle. Notice that  $\varphi H\psi$, $D_1(\varphi+\psi)=Z$, $\varphi$ satisfies \eqref{eq:critcycle1} and $\psi$ satisfies \eqref{eq:critcyclealt}.}
    \label{fig:CriticalCycle}
\end{figure}

\begin{prop}\label{prop:critcycle}
Let $\varphi$ be a coloring on $X$. If $Z=\{\{a,b\},\{b,c\},\{c,d\}\{d,a\}\}$ is a critical cycle for $\varphi$, then $\varphi \in \neg\mathcal{R}$. 
\end{prop}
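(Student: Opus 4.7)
The candidate for a non-trivial reconstruction is the coloring $\psi$ obtained from $\varphi$ by flipping the color on every edge of $Z$ and leaving all other pairs unchanged; this is precisely the coloring depicted on the right of Figure~\ref{fig:CriticalCycle}, and morally it corresponds to replacing the orientation \eqref{eq:critcycle1} of the critical cycle by its twin \eqref{eq:critcyclealt}. Since $\varphi$ and $\psi$ disagree on the four edges of $Z$, we immediately have $\psi\neq\varphi$; and since they agree on the diagonals $\{a,c\}$ and $\{b,d\}$, which lie in $[X]^2\setminus Z$, we also have $\psi\neq 1-\varphi$.

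The plan then reduces to proving $\Hom(\varphi)=\Hom(\psi)$. Using the fact recalled in the preliminaries, that $\varphi H\psi$ is equivalent to $\Hom(\varphi)\cap[X]^3=\Hom(\psi)\cap[X]^3$, it will suffice to verify the equivalence $H\in\Hom(\varphi)\Leftrightarrow H\in\Hom(\psi)$ for each $H\in[X]^3$. I will split the analysis by $|H\cap\{a,b,c,d\}|$. If this number is $0$ or $1$, no edge of $H$ lies in $Z$, so $\varphi$ and $\psi$ coincide on $[H]^2$ and the equivalence is automatic. If the number equals $2$ and the pair $H\cap\{a,b,c,d\}$ is one of the diagonals $\{a,c\}$ or $\{b,d\}$, then the only edge of $H$ meeting $\{a,b,c,d\}$ is already outside $Z$, so again $\varphi$ and $\psi$ agree on $[H]^2$.

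The two substantive cases are what remain. When $H\cap\{a,b,c,d\}=\{x,y\}$ with $\{x,y\}\in Z$ and $z\in X\setminus\{a,b,c,d\}$, condition \eqref{eq:critcycle2} forces $\varphi\{x,z\}\neq\varphi\{y,z\}$, so $H$ is not $\varphi$-homogeneous; since $\psi$ coincides with $\varphi$ on both $\{x,z\}$ and $\{y,z\}$, $H$ is not $\psi$-homogeneous either. When $H\subseteq\{a,b,c,d\}$, the bookkeeping is cleanest if one sets $\alpha=\varphi\{a,b\}$; equations \eqref{eq:critcycle1} then fix the $\varphi$-color of every pair in $[\{a,b,c,d\}]^2$ as a function of $\alpha$, and a direct check on each of the four triples $\{a,b,c\}$, $\{a,b,d\}$, $\{a,c,d\}$, $\{b,c,d\}$ shows that both colors appear in $\varphi$; the analogous computation for $\psi$, which flips the cycle edges and preserves the diagonals, again produces both colors on each of the four triples. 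Hence no triple inside $\{a,b,c,d\}$ is homogeneous for either coloring.

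The only real obstacle is this last case-analysis inside $\{a,b,c,d\}$, which is routine but easy to misbook; parametrizing by $\alpha$ collapses it to four short checks and removes any genuine difficulty.
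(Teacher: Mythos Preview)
Your proof is correct and follows essentially the same approach as the paper: define $\psi$ by flipping $\varphi$ on the four edges of $Z$, then verify $\varphi H\psi$ triple by triple, splitting according to how the triple meets $\{a,b,c,d\}$. The only cosmetic differences are that you explicitly check $\psi\neq\varphi$ and $\psi\neq 1-\varphi$ (which the paper leaves implicit), and that for triples inside $\{a,b,c,d\}$ you enumerate all four directly via the parameter $\alpha$, whereas the paper handles them uniformly by writing such a triple as $\{x,y,z\}$ with $\{x,y\},\{y,z\}\in Z$ and reading off from \eqref{eq:critcycle1} that $\varphi\{x,z\}=\varphi\{y,z\}=1-\varphi\{x,y\}$, whence $\psi\{x,y\}=\psi\{x,z\}=1-\psi\{y,z\}$.
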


\begin{proof}
    Let $\psi$ be a coloring on $X$ defined by
    \[\psi\{x,y\} = \begin{cases}
         \varphi\{x,y\}, &\text{ if } \{x,y\}\notin Z\\
         1-\varphi\{x,y\}, &\text{ if } \{x,y\} \in Z.
    \end{cases}\]
    We want to prove that $\varphi H\psi$. Let $T\in [X]^3$. If $[T]^2 \cap Z = \emptyset$ then $\psi\restriction [T]^2 =\varphi\restriction [T]^2$, which means that $T \in \Hom(\varphi)$ if, and only if, $T\in \Hom(\psi)$.

    On the other hand, if $[T]^2\cap Z \neq \emptyset$, we have two cases.
\begin{itemize}
\item[(i)] If $T\not\subset \{a,b,c,d\}$, then $T=\{x,y,z\}$ where $\{x,y\}\in Z$ and $z\in X\setminus \{a,b,c,d\}$. By definition of $\psi$, we have $\psi\{x,z\}=\varphi\{x,z\}$ and $\psi\{y,z\}=\varphi\{y,z\}$. But equation \eqref{eq:critcycle2} says $\varphi\{x,z\}\neq \varphi\{y,z\}$. So $T$ is neither $\varphi-$homogenous nor $\psi-$homogenous. 

\item[(ii)] If $T\subset \{a,b,c,d\}$.  Then $T=\{x,y,z\}$ such that $\{x,y\},\{y,z\}\in Z$ and we also have $\varphi\{x,z\}=\varphi\{y,z\}=1-\varphi\{x,y\}$, according to equation \eqref{eq:critcycle1}. Then, by definition of $\psi$, we get
    \[ \psi\{x,y\} = \psi\{x,z\} = 1-\psi\{y,z\},\]
    which again means that $T$ is not $\varphi-$homogenous nor $\psi-$homogenous.    
\end{itemize}
\end{proof}

\begin{ex}[\cite{clari-uzca2023}]
\label{ex:PcritCcrit}
\begin{itemize}
\item[(i)] A coloring may have many critical pairs. In fact,  there exist colorings with infinite many critical pairs. For instance, consider a  partition of $\N$ into two infinite sets, say the even natural numbers $A_0 = \{2k\}_{k\in\N}$ and the odd natural numbers $A_1=\{2k+1\}_{k\in \N}$. Define $\varphi$ by
\[
\varphi\{x,y\}=1\iff \{x,y\}\subseteq A_i \text{ for some } i\in \{0,1\}.
\]
Let $x\in A_0$ and $y\in A_1$. Then $\{x,y\}$ is critical for $\varphi$. 

        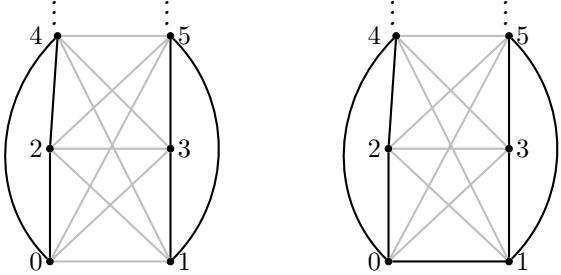
\begin{figure}[h]
            \centering
            \begin{tikzpicture}[scale=1.5]
\node[left] (0) at (0,0) {\footnotesize$0$};
\node[right] (1) at (1,0) {\footnotesize$1$};
\node[left] (2) at (0,1) {\footnotesize$2$};
\node[right] (3) at (1,1) {\footnotesize$3$};
\node[left] (4) at (0,2) {\footnotesize $4$};
\node[right] (5) at (1,2) {\footnotesize $5$};
\node[above] (l) at (0,2) {\small $\vdots$};
\node[above] (r) at (1,2) {\small $\vdots$};
\bgroup
\tikzstyle{every node}=[circle, draw, fill=black,inner sep=0pt, minimum width=2.5pt]

\node[left] (0) at (0,0) {};
\node[right] (1) at (1,0) {};
\node[left] (2) at (0,1) {};
\node[right] (3) at (1,1) {};
\node[right] (4) at (0,2) {};
\node[right] (5) at (1,2) {};

\draw[thick] (0) to (2) to (4) to[out=225, in=135] (0);

\draw[thick] (1) to (3) to (5) to[out=-45, in=45] (1);

\draw[gray!50,thick] (0)-- (1) -- (2) -- (3) -- (4) -- (5) -- (0) -- (3);

\draw[gray!50,thick] (1) -- (4);
\draw[gray!50,thick] (2) -- (5);
\egroup

\node[left] (0) at (3,0) {\footnotesize$0$};
\node[right] (1) at (4,0) {\footnotesize$1$};
\node[left] (2) at (3,1) {\footnotesize$2$};
\node[right] (3) at (4,1) {\footnotesize$3$};
\node[left] (4) at (3,2) {\footnotesize $4$};
\node[right] (5) at (4,2) {\footnotesize $5$};
\node[above] (l) at (3,2) {\small $\vdots$};
\node[above] (r) at (4,2) {\small $\vdots$};
\bgroup
\tikzstyle{every node}=[circle, draw, fill=black,inner sep=0pt, minimum width=2.5pt]

\node[left] (0) at (3,0) {};
\node[right] (1) at (4,0) {};
\node[left] (2) at (3,1) {};
\node[right] (3) at (4,1) {};
\node[right] (4) at (3,2) {};
\node[right] (5) at (4,2) {};

\draw[thick] (0) to (2) to (4) to[out=225, in=135] (0);

\draw[thick] (0) to (1) to (3) to (5) to[out=-45, in=45] (1);

\draw[gray!50,thick] (2) -- (3) -- (4) -- (5) -- (0) -- (3);

\draw[gray!50,thick] (5) -- (2) -- (1) -- (4);

\egroup
\end{tikzpicture}
            \caption{A coloring with infinite many critical pairs.}
            \label{fig:PartitionInducedColoring}
        \end{figure}

\item[(ii)] A coloring in $\neg\Rec$ that does not have critical pairs is described in Figure \ref{fig:no-critical-pair}. This coloring has, in fact, a critical cycle, given by $Z=\{\{0,1\},\{1,2\},\{2,3\},\{3,0\}\}$. Let $\varphi$ be the coloring represented in the figure below and $\psi$ the coloring obtained by changing the edges in $Z$, that is, $D_1(\varphi+\psi)=Z$, then $\varphi H\psi$. Note that the collection of homogenous sets for $\varphi$ is $\Hom(\varphi)=\{\{0,4,5,6\dots\},\{2,4,5,6,\dots\}\}$
\begin{figure}[h]
            \centering
       \begin{tikzpicture}[scale=1.5]
\node[left] (0) at (0,0) {\footnotesize$0$};
\node[right] (1) at (1,0) {\footnotesize$1$};
\node[right] (2) at (1,1) {\footnotesize$2$};
\node[left] (3) at (0,1) {\footnotesize$3$};
\node[right] (4) at (0.5,2) {\footnotesize $4$};
\node[above left] (5) at (0.5,3) {\footnotesize $5$};
\node[above] (m) at (0.5,3) {\small $\vdots$};

\bgroup
\tikzstyle{every node}=[circle, draw, fill=black,inner sep=0pt, minimum width=2.5pt]

\node (0) at (0,0) {};
\node (1) at (1,0) {};
\node (2) at (1,1) {};
\node (3) at (0,1) {};
\node (4) at (0.5,2) {};
\node (5) at (0.5,3) {};

\draw[thick] (0) to (3) to (1) to (2);
\draw[thick,gray!65] (3) to (2) to (0) to (1);

\draw[thick] (0) -- (4) -- (5) to[out=-70,in=90] (2) -- (4) -- (5) to (0);

\draw[gray!65,thick]  (1) -- (4) -- (3) to[out=90, in=240] (5) -- (1) ;

\egroup

\node[left] (0) at (3,0) {\footnotesize$0$};
\node[right] (1) at (4,0) {\footnotesize$1$};
\node[right] (2) at (4,1) {\footnotesize$2$};
\node[left] (3) at (3,1) {\footnotesize$3$};
\node[right] (4) at (3.5,2) {\footnotesize $4$};
\node[above left] (5) at (3.5,3) {\footnotesize $5$};
\node[above] (m) at (3.5,3) {\small $\vdots$};

\bgroup
\tikzstyle{every node}=[circle, draw, fill=black,inner sep=0pt, minimum width=2.5pt]

\node (0) at (3,0) {};
\node (1) at (4,0) {};
\node (2) at (4,1) {};
\node (3) at (3,1) {};
\node (4) at (3.5,2) {};
\node (5) at (3.5,3) {};

\draw[thick] (0) to (1) to (3) to (2);
\draw[thick] (0) -- (4) -- (5) to[out=-70,in=90] (2) -- (4) -- (5) to (0);

\draw[gray!65,thick] (1) -- (2) -- (0) to (3);
\draw[gray!65,thick]  (1) -- (4) -- (3) to[out=90, in=240] (5) -- (1) ;
\egroup
\end{tikzpicture}
            \caption{A coloring in $\neg \mathcal{R}$ without critical pair.}
            \label{fig:no-critical-pair}
        \end{figure}
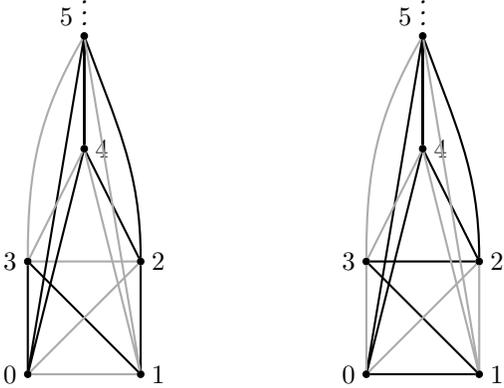
    \end{itemize}
\end{ex}

\section{Boolean sums}

The sum $\varphi+\psi$ of two colorings $\varphi, \psi\in 2^{[X]^2}$ is the usual sum module 2. They were called Boolean sums in \cite{Pouzetetal2011}, from where we take the terminology. Boolean sums $\varphi+\psi$ where  $\varphi H \psi$ will play a very important role in the whole paper. 
In this section we show a crucial result about them. 

\begin{prop}
\label{prop:Hom}
Let $\varphi$ and $\psi$ be colorings on $X$. Then  
\begin{itemize}
\item[(i)] $\Hom(\varphi)\cap\Hom(\psi)\subseteq\Hom(\varphi+\psi)$.

\item[(ii)]$\varphi H \psi \iff \Hom(\varphi)\cup \Hom(\psi) \subseteq \Hom(\varphi+\psi)$.
\end{itemize}

\end{prop}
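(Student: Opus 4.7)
The plan is to observe that the Boolean sum is pointwise addition mod 2, so it interacts cleanly with ``constancy on $[H]^2$''. Concretely, if $\varphi\restriction[H]^2\equiv i$ and $\psi\restriction[H]^2\equiv j$ with $i,j\in\{0,1\}$, then $(\varphi+\psi)\restriction[H]^2\equiv i+j\pmod{2}$. This immediately delivers~(i).

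For~(ii), the forward direction is free: if $\varphi H\psi$, then $\Hom(\varphi)=\Hom(\psi)=\Hom(\varphi)\cap\Hom(\psi)$, and (i) hands us the desired inclusion. The backward direction is the substantive part, but still short. The key algebraic identity is $\psi=\varphi+(\varphi+\psi)$ (and symmetrically $\varphi=\psi+(\varphi+\psi)$). I would take any $H\in\Hom(\varphi)$; by the hypothesis $H\in\Hom(\varphi+\psi)$, so both $\varphi\restriction[H]^2$ and $(\varphi+\psi)\restriction[H]^2$ are constant, whence their sum $\psi\restriction[H]^2$ is constant, i.e.\ $H\in\Hom(\psi)$. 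The same argument with the roles swapped gives $\Hom(\psi)\subseteq\Hom(\varphi)$, hence $\varphi H\psi$.

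There is no real obstacle here; the statement is essentially a bookkeeping consequence of the fact that $(2^{[X]^2},+)$ is a group and that ``constant on $[H]^2$'' defines a subgroup. The only mild point to double-check is the size condition $|H|\geq 3$ built into the definition of $\Hom$ in~\eqref{eq:Hom}, but it is preserved throughout since we are only moving sets $H$ between $\Hom(\varphi)$, $\Hom(\psi)$, and $\Hom(\varphi+\psi)$ without ever changing $H$ itself. If one wished, the equivalence $\varphi H\psi\iff\Hom(\varphi)\cap[X]^3=\Hom(\psi)\cap[X]^3$ noted earlier lets us restrict the whole argument to $3$-element sets, making the verification essentially a three-line check.
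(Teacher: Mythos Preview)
Your proof is correct and follows essentially the same approach as the paper's: both use that the sum of two constant functions is constant for~(i), and for the backward direction of~(ii) both exploit the identity $\psi=\varphi+(\varphi+\psi)$ (the paper writes $\beta=\varphi+\psi$ and invokes~(i) for the pair $\varphi,\beta$, which is exactly your argument packaged slightly differently).
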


\begin{proof}
(i) Let $\{x,y,z\} \in \Hom(\varphi)\cap \Hom(\psi)$. From the definitions of homogenous set and Boolean sum we have
\begin{align*}
        (\varphi+\psi)\{x,y\}=\varphi\{x,y\}+\psi\{x,y\}&=\varphi\{x,z\}+\psi\{x,z\}=(\varphi+\psi)\{x,z\}\\
        &=\varphi\{y,z\}+\psi\{y,z\}=(\varphi+\psi)\{y,z\}.
\end{align*}

(ii) $(\Longrightarrow)$ Suppose $\varphi H\psi$. Then, $  \Hom(\varphi)\cap \Hom(\psi)=\Hom(\varphi)\cup \Hom(\psi)$. The result follows from  part (i).

$(\Longleftarrow)$ Suppose $\Hom(\varphi)\cup\Hom(\psi) \subseteq \Hom(\varphi+\psi)$. Let $\beta=\varphi+\psi$, then $\varphi = \beta+\psi$ and $\psi=\beta+\varphi$. By  part (i) we have
\[
\Hom(\varphi)\cap\Hom(\beta)\subseteq \Hom(\varphi+\beta)=\Hom(\psi).
\]
Since $\Hom(\varphi)\subseteq \Hom(\beta)$, we have $\Hom(\varphi)\subseteq\Hom(\psi)$. Analogously, $\Hom(\psi)\subseteq\Hom(\varphi)$.
 Hence $\varphi H\psi$.
 \end{proof}

The following lemma is a key result on which several arguments in what follows depend. We were surprised to find out that it was already  known (see \cite[Lemma 2.2]{Pouzetetal2011}). We give an alternative proof.

\begin{lem}[\cite{Pouzetetal2011}]
\label{lem:Pouzet}
Let $\varphi,\psi$ be colorings on $X$. Then  $\varphi H \psi$ if, and only if, for every $\{x,y,z\} \in  [X]^3$
\begin{equation}
\label{eq:pouzet}
    (\varphi+\psi)\{x,y\} =(\varphi+\psi)\{x,z\}\neq (\varphi+\psi)\{y,z\} \implies \varphi\{x,y\} \neq \varphi\{x,z\}.
\end{equation}
\end{lem}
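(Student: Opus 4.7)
The plan is to exploit Proposition \ref{prop:Hom}(ii), which rephrases $\varphi H \psi$ as the inclusion $\Hom(\varphi) \cup \Hom(\psi) \subseteq \Hom(\beta)$ with $\beta = \varphi + \psi$. Both implications of the lemma can then be settled triangle by triangle, using the \emph{useful fact} recalled in the preliminaries that $\varphi H \psi$ is already detected on $[X]^3$.

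For $(\Longrightarrow)$, I assume $\varphi H \psi$ and fix $\{x,y,z\} \in [X]^3$ with $\beta\{x,y\} = \beta\{x,z\} = b$ and $\beta\{y,z\} = 1-b$. Since $\beta$ is not constant on this triple, the contrapositive of Proposition \ref{prop:Hom}(ii) says neither $\varphi$ nor $\psi$ is constant on it. Suppose, toward a contradiction, that $\varphi\{x,y\} = \varphi\{x,z\} = a$. Non-$\varphi$-homogeneity then forces $\varphi\{y,z\} = 1-a$, and a short mod-$2$ computation yields
\[
\psi\{x,y\} = \psi\{x,z\} = a+b \quad\text{and}\quad \psi\{y,z\} = (1-a)+(1-b) \equiv a+b \pmod{2},
\]
so $\{x,y,z\} \in \Hom(\psi) \subseteq \Hom(\beta)$, contradicting $\beta\{x,y\} \neq \beta\{y,z\}$. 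Hence $\varphi\{x,y\} \neq \varphi\{x,z\}$.

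For $(\Longleftarrow)$, I assume the triangle condition. Let $T = \{x,y,z\} \in \Hom(\varphi)$ with $\varphi\restriction [T]^2 = c$, and I will show $T \in \Hom(\psi)$. If $\beta$ were non-constant on $T$, then, since any two edges of a triangle share a vertex, I could relabel so that $\beta\{x,y\} = \beta\{x,z\} \neq \beta\{y,z\}$; the hypothesis would give $\varphi\{x,y\} \neq \varphi\{x,z\}$, contradicting $\varphi\restriction [T]^2 = c$. Hence $\beta$ is constant on $T$ and so is $\psi = \varphi + \beta$, i.e., $T \in \Hom(\psi)$. The reverse inclusion $\Hom(\psi) \cap [X]^3 \subseteq \Hom(\varphi) \cap [X]^3$ follows by symmetry, since the hypothesis is stable under swapping $\varphi$ and $\psi$: whenever $\beta\{x,y\} = \beta\{x,z\}$, the identity $\beta = \varphi + \psi$ makes $\varphi\{x,y\} = \varphi\{x,z\}$ equivalent to $\psi\{x,y\} = \psi\{x,z\}$.

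The only real subtlety is the mod-$2$ cancellation in the forward direction that promotes \emph{``$\varphi$ equal on two edges''} to \emph{``$\psi$ equal on all three edges''}; once this is observed the proof is bookkeeping, since on a triangle the only non-constant patterns of $\beta$ have two matching edges (necessarily sharing a vertex) against a third differing edge.
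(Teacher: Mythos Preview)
Your proof is correct and follows essentially the same approach as the paper's: both directions hinge on Proposition~\ref{prop:Hom}(ii) and the same mod-$2$ bookkeeping on a single triangle. The only cosmetic differences are that in $(\Longrightarrow)$ you compute $\psi$ explicitly to land in $\Hom(\psi)$ (whereas the paper chases an equivalence chain to force $\beta\{x,y\}=\beta\{y,z\}$), and in $(\Longleftarrow)$ you invoke the $3$-element characterization of $H$ directly rather than verifying $\Hom(\varphi)\cup\Hom(\psi)\subseteq\Hom(\beta)$ via contrapositive; neither difference is substantive.
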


\begin{proof}
Suppose  $\varphi H\psi$. Towards  a contradiction suppose there are $x,y$ and $z$ such that
\begin{equation}\label{eq:notHom}
    (\varphi+\psi)\{x,y\}=(\varphi+\psi)\{x,z\} \neq (\varphi+\psi)\{y,z\} \quad \text{and} \quad \varphi\{x,y\}=\varphi\{x,z\}.
\end{equation}
Since $\{x,y,z\}\not\in\Hom(\varphi+\psi)$, from  proposition (\ref{prop:Hom}) we know $\{x,y,z\} \notin \Hom(\varphi)\cup\Hom(\psi)$. In particular,  $\{x,y,z\}\not\in \Hom(\varphi)$, but  $\varphi\{x,y\} = \varphi\{x,z\}$, thus
\begin{align*}
    \varphi\{x,y\}&= \varphi\{x,z\} = 1-\varphi\{y,z\}.
\end{align*}
By the definition of $\varphi+\psi$, we have that $\psi\{x,y\}=\psi\{x,z\}$ and by symmetry, 
\begin{align*}
    \psi\{x,y\}&=\psi\{x,z\}=1-\psi\{y,z\}.
\end{align*}
Thus, 
\begin{align*}
    (\varphi+\psi)\{x,y\} = 0 &\iff \varphi\{x,y\} = \psi\{x,y\}\\
    &\iff 1-\varphi\{y,z\} = 1-\psi\{y,z\} \\
    &\iff \varphi\{y,z\}=\psi\{y,z\}\\
    &\iff (\varphi+\psi)\{y,z\}=0
\end{align*}
which contradicts equation \eqref{eq:notHom}.

Conversely, suppose \eqref{eq:pouzet} holds. By proposition \ref{prop:Hom}, to get that $\varphi H\psi$, it suffices to show that 
\[ 
\hom(\varphi)\cup\hom(\psi) \subseteq \hom(\varphi+\psi).
\]
Let $\{x,y,z\}\notin \Hom(\varphi+\psi)$. By  permuting the vertices $x,y,z$ if necessary, we have
\[(\varphi+\psi)\{x,y\} = (\varphi+\psi)\{x,z\}\neq (\varphi+\psi)\{y,z\}.\]
By \eqref{eq:pouzet}, we have that $\varphi\{x,y\}\neq\varphi\{x,z\}$ and $\psi\{x,y\}\neq\psi\{x,z\}$. This, in turn, implies that $\{x,y,z\}\notin\Hom(\varphi)\cup\Hom(\psi)$. 
\end{proof}

\begin{figure}[h]
    \centering
    \begin{tikzpicture}[scale=1.0]
\node[above] (x) at (0,1) {\footnotesize$x$};
\node[below] (y) at (-1,0) {\footnotesize$y$};
\node[below] (z) at (1,0) {\footnotesize$z$};
\node[below] (phi) at (0,-0.5) {\footnotesize$\varphi+\psi$};

\node[above] (x1) at (4,1) {\footnotesize$x$};
\node[below] (y1) at (3,0) {\footnotesize$y$};
\node[below] (z1) at (5,0) {\footnotesize$z$};
\node[below] (phi1) at (4,-0.5) {\footnotesize$\varphi$};

\bgroup
\tikzstyle{every node}=[circle, draw, fill=black,inner sep=0pt, minimum width=2.5pt]
\node[above] (x) at (0,1) {};
\node[below] (y) at (-1,0) {};
\node[below] (z) at (1,0) {};

\node[above] (x1) at (4,1) {};
\node[below] (y1) at (3,0) {};
\node[below] (z1) at (5,0) {};

\draw[thick] (y) -- (x) -- (z);
\draw[gray!50, thick] (y) -- (z);
\draw[thick] (x1) -- (y1);
\draw[gray!50,thick] (x1) -- (z1);
\egroup

\end{tikzpicture}
    \caption{Lemma \ref{lem:Pouzet} says that if $\varphi+\psi$ looks like the coloring on the left, or its complement, then $\varphi$ must look like the coloring on the right (or switching the colors of edges  $\{x,y\}$ and $\{x,z\}$, respectively).}
    \label{fig:lemaPouzet}
\end{figure}
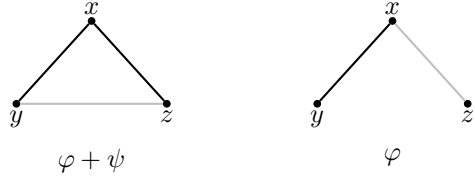

\section{Paths and cycles on components of boolean sums}
\label{sec:components}

The aim of this section is to show that,  for large enough $X$, if $\varphi H\psi$, the components of $D_1(\varphi+\psi)$ can  be only of two forms: either path or cycles of even length. This is a result from \cite{{Pouzet2013}}, however, our proof is different. On the other hand, we also show that if $\varphi H\psi$ and $C$ is a component of $D_1(\varphi+\psi)$, then $\varphi\restriction [C]^2$ is essentially a coloring given by a partition of $C$ (similar to  example \ref{ex:PcritCcrit}).

We need several results for the proof. They will be used also in the next section. 

\begin{lem}\label{lem:NoClaws}
Let $\varphi$ and $\psi$ be colorings  on $X$ such that $\varphi H\psi$. Then $\varphi+\psi$ does not contain claws. 
\end{lem}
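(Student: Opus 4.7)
The plan is to argue by contradiction: suppose $D_1(\varphi+\psi)$ contains a claw. By the definition (and figure \ref{fig:claw}) this means there exist four vertices $x,y,z,w\in X$ such that the pairs $\{w,x\},\{w,y\},\{w,z\}$ all have $(\varphi+\psi)$-value $1$, while the pairs $\{x,y\},\{x,z\},\{y,z\}$ all have $(\varphi+\psi)$-value $0$. The whole point will be to feed each of the three triples containing $w$ into Lemma \ref{lem:Pouzet} and derive a $3$-coloring impossibility.

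The key observation is that for every two-element subset $\{u,v\}\subseteq \{x,y,z\}$, the triple $\{w,u,v\}$ satisfies the hypothesis of Lemma \ref{lem:Pouzet} with $w$ playing the role of the central vertex: we have
\[
(\varphi+\psi)\{w,u\}=(\varphi+\psi)\{w,v\}=1\ \neq\ 0=(\varphi+\psi)\{u,v\}.
\]
Since $\varphi H\psi$ by hypothesis, Lemma \ref{lem:Pouzet} applies and gives $\varphi\{w,u\}\neq \varphi\{w,v\}$.

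Applying this for the three pairs $\{x,y\},\{x,z\},\{y,z\}$ yields three conditions:
\[
\varphi\{w,x\}\neq \varphi\{w,y\},\qquad \varphi\{w,x\}\neq \varphi\{w,z\},\qquad \varphi\{w,y\}\neq \varphi\{w,z\}.
\]
Thus $\varphi\{w,x\},\varphi\{w,y\},\varphi\{w,z\}$ would be three pairwise distinct elements of $\{0,1\}$, which is impossible by pigeonhole. This contradiction shows that no claw can exist in $D_1(\varphi+\psi)$.

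There is essentially no obstacle here; the proof is a direct triple-application of Lemma \ref{lem:Pouzet} combined with the fact that only two colors are available. The only minor care needed is to notice that Lemma \ref{lem:Pouzet} is symmetric in the two "equal-value" edges, so it really is the center $w$ of the claw that plays the role of the distinguished vertex in each of the three applications.
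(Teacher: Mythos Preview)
Your proof is correct and follows essentially the same route as the paper's: three applications of Lemma~\ref{lem:Pouzet} to the three triples through the center $w$, yielding three pairwise inequalities among $\varphi\{w,x\},\varphi\{w,y\},\varphi\{w,z\}\in\{0,1\}$, a contradiction. The only minor point is that the paper's notion of ``claw for $\varphi+\psi$'' allows the claw to sit in $D_i(\varphi+\psi)$ for either $i\in\{0,1\}$, whereas you explicitly treat only $i=1$; but since the hypothesis of Lemma~\ref{lem:Pouzet} is symmetric in the two colors (it only asks that two edges agree and the third differs), your argument covers the other case verbatim.
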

\begin{proof}
Suppose $F=(\{x,y,z,w\}, [{x,y,z}]^2)$ is  a claw in $D_i(\varphi+\psi)$. Then by definition of a claw the following hold:
    \begin{align*}
        (\varphi+\psi)\{w,x\} &= (\varphi+\psi)\{w,y\} \neq  (\varphi+\psi)\{x,y\},\\
        (\varphi+\psi)\{w,x\} &= (\varphi+\psi)\{w,z\} \neq  (\varphi+\psi)\{x,z\},\\
        (\varphi+\psi)\{w,y\} &= (\varphi+\psi)\{w,z\} \neq  (\varphi+\psi)\{y,z\}.
    \end{align*}
By lemma \ref{lem:Pouzet}, we get $\varphi\{w,x\}\neq\varphi\{w,y\}$, $\varphi\{w,x\}\neq\varphi\{w,z\}$ and $\varphi\{w,y\}\neq \varphi\{w,z\}$. But the first two inequalities obtained imply $\varphi\{w,y\}=\varphi\{w,z\}$, which is a contradiction.
\end{proof}

The following proposition says that, for colorings on large enough sets,  all homogenous sets for a Boolean sum $\varphi+\psi$ of $H-$equivalent colorings have the same color. This fact will be needed for the proof of one of the main result in this paper. 

We recall that $R(k)$ is the {\em Ramsey number for homogenous sets of size $k$}, that is, $R(k)$ is the least integer such that any coloring on at least $R(k)$ vertices must have a homogenous set of size $k$. The case $k=7$ is important for the next result, we know that $ 205 \leq R(7) \leq 540$ (see \cite[table Ia, page 4]{Radz2017}).

\begin{lem}
\label{prop:Homi}
 Let $\varphi$ and $\psi$ be colorings on a set $X$  of size  at least $R(7)$   such that $\varphi H\psi$. Then there is $i\in \{0,1\}$ such that $\Hom(\varphi+\psi)=\Hom_i(\varphi+\psi)$.
\end{lem}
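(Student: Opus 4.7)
The plan is to argue by contradiction, using Ramsey's theorem to produce a large monochromatic set for $\varphi+\psi$ and then forcing an impossible pattern of $\varphi$-values via Lemmas \ref{lem:NoClaws} and \ref{lem:Pouzet}. Since $|X|\geq R(7)$, there is some $H\in\Hom(\varphi+\psi)$ with $|H|=7$; after possibly replacing $\psi$ by $1-\psi$ (which preserves $\varphi H\psi$ and converts $\varphi+\psi$ into its complement), we may assume $H\in\Hom_1(\varphi+\psi)$, so $H$ is a clique in $D_1(\varphi+\psi)$. Suppose, towards a contradiction, that there exists $H'\in\Hom_0(\varphi+\psi)$ with $|H'|\geq 3$. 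Since a clique and an independent set (in $D_1(\varphi+\psi)$) meet in at most one vertex, $|H\cap H'|\leq 1$.

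The main step is to extract three vertices $a,b,c\in H'$ with a common $D_1(\varphi+\psi)$-neighbour $x\in H\setminus\{a,b,c\}$. Lemma \ref{lem:NoClaws} applied to $D_1(\varphi+\psi)$ prevents any vertex outside $H$ from having three non-$D_1$-neighbours inside $H$, since such three vertices would form a triangle in $D_1$ together with the outside vertex as isolated; hence every vertex outside $H$ has at most two non-neighbours in $H$. In the sub-case $H\cap H'=\emptyset$, I pick any three $a,b,c\in H'$; the union of their non-neighbour sets in $H$ has size at most $6<7=|H|$, producing a common $D_1$-neighbour $x\in H$ (this is precisely where the Ramsey bound $R(7)$ is needed). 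In the sub-case $H\cap H'=\{c\}$, I pick $a,b\in H'\setminus H$; the vertex $c$ itself is a non-$D_1$-neighbour of both $a$ and $b$ (as $a,b,c\in H'$), so each of $a,b$ has at most one non-neighbour remaining in $H\setminus\{c\}$. Since $c\in H$ is a $D_1$-neighbour of every other element of $H$, a common $D_1$-neighbour $x\in H\setminus\{c\}$ of $a,b,c$ exists.

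With $x$ secured, one has the pattern $(\varphi+\psi)\{a,b\}=(\varphi+\psi)\{a,c\}=(\varphi+\psi)\{b,c\}=0$ and $(\varphi+\psi)\{a,x\}=(\varphi+\psi)\{b,x\}=(\varphi+\psi)\{c,x\}=1$. Applying Lemma \ref{lem:Pouzet} to each of the triples $\{a,b,x\}$, $\{a,c,x\}$, $\{b,c,x\}$ (in each the two $(\varphi+\psi)$-value-$1$ edges share the vertex $x$) yields $\varphi\{a,x\}\neq\varphi\{b,x\}$, $\varphi\{a,x\}\neq\varphi\{c,x\}$, and $\varphi\{b,x\}\neq\varphi\{c,x\}$. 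Three binary values cannot be pairwise distinct, which is the desired contradiction. The main subtlety I anticipate is the case split on $|H\cap H'|$: in the second sub-case one must remember that $c$ itself already accounts for one of the at-most-two allowed non-neighbours of $a$ and $b$, which is what keeps the argument alive once a vertex of $H'$ is allowed to lie inside $H$.
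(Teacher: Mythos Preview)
Your proof is correct and follows essentially the same strategy as the paper's: use Ramsey to obtain a $7$-element set $H\in\Hom_1(\varphi+\psi)$, invoke claw-freeness to bound the $D_0$-neighbours in $H$ of each vertex of a $\Hom_0$ triple, and then pigeonhole to find a common $D_1$-neighbour producing a forbidden claw. Your case split on $|H\cap H'|$ is actually more careful than the paper's (which glosses over the possible intersection), and your concluding contradiction via Lemma~\ref{lem:Pouzet} simply re-derives the relevant instance of Lemma~\ref{lem:NoClaws}.
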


\begin{proof}
Let $\varphi H\psi$ and, to simplify the notation, let $D_i=D_i(\varphi+\psi)$ for $i=0,1$.  Suppose $\Hom_0(\varphi+\psi)\neq \emptyset, \Hom_1(\varphi+\psi)\neq \emptyset$. By hypothesis, there is an homogeneous set of size $7$ for $\varphi+\psi$. W.l.o.g.~ take $H_1\in \Hom_1(\varphi+\psi)\cap [X]^7$ and $H_0\in \Hom_0(\varphi+\psi)$. Then, by lemma \ref{lem:NoClaws}, for each $x\in H_0$,  we have
$$
\deg_{D_0\cap [H_1\cup\{x\}]^2}(x) \leq 2.
$$
Then, 
\[
\left|[H_0\cup H_1]^2 \cap D_0 \right|= \left|\bigcup_{x\in H_0} \{y\in H_1\,:\,\varphi+\psi\{x,y\}=0\}\right|\leq 2|H_0| \leq 6. 
\]
Since $H_1$ has size 7, there must be a $y\in H_1$ such that $\deg_{D_1\cap[H_0\cup\{x\}]^2}(y) = 3$, which implies that $(\varphi+\psi)\upharpoonright[H_0\cup\{y\}]^2$ contains a claw, contradicting lemma \ref{lem:NoClaws}.  
\end{proof}
The  result does not hold in general, but we do not know whether the bound $R(7)$ is an  optimal condition, it seems to be very loose one. Example \ref{fig:homsum} provides  a coloring $\varphi$ and one of its reconstruction $\psi$ such that  $\Hom(\varphi+\psi)\not=\Hom_i(\varphi+\psi)$ for $i=0,1$. Due to the previous result in several places we will assume w.l.o.g.~ that  $\Hom(\varphi+\psi)=\Hom_0(\varphi+\psi)$. 

\begin{figure}[H]
    \centering
    \begin{tikzpicture}[scale=1.5]

\node (varphi) at (0.5, -0.5) {\small$\varphi$};
\node[above] (a) at (0.5,2) {\footnotesize$a$};
\node[above left] (e) at (0,1) {\footnotesize$e$};
\node[below left] (d) at (0,0) {\footnotesize$d$};
\node[below right] (c) at (1,0) {\footnotesize$c$};
\node[above right] (b) at (1,1) {\footnotesize$b$};

\node (psi) at (2.5, -0.5) {\small$\psi$};
\node[above] (a1) at (2.5,2) {\footnotesize$a$};
\node[above left] (e1) at (2,1) {\footnotesize$e$};
\node[below left] (d1) at (2,0) {\footnotesize$d$};
\node[below right] (c1) at (3,0) {\footnotesize$c$};
\node[above right] (b1) at (3,1) {\footnotesize$b$};

\node (phi+psi) at (5, -0.5) {\small$\varphi+\psi$};
\node[above] (a1) at (5,2) {\footnotesize$a$};
\node[above left] (e1) at (4.5,1) {\footnotesize$e$};
\node[below left] (d1) at (4.5,0) {\footnotesize$d$};
\node[below right] (c1) at (5.5,0) {\footnotesize$c$};
\node[above right] (b1) at (5.5,1) {\footnotesize$b$};

\bgroup
\tikzstyle{every node}=[circle, draw, fill=black,inner sep=0pt, minimum width=2.5pt]
\node (a)  at (0.5,2) {};
\node (d) at (0,0) {};
\node (c) at (1,0) {};
\node (b) at (1,1) {};
\node (e) at (0,1) {};

\node (a1)  at (2.5,2) {};
\node (d1) at (2,0) {};
\node (c1) at (3,0) {};
\node (b1) at (3,1) {};
\node (e1) at (2,1) {};

\node (a2)  at (5,2) {};
\node (d2) at (4.5,0) {};
\node (c2) at (5.5,0) {};
\node (b2) at (5.5,1) {};
\node (e2) at (4.5,1) {};

\draw[thick] (a) -- (b) -- (c) -- (a);
\draw[gray!65, thick] (a) -- (e) -- (d) -- (a);
\draw[thick] (c) -- (d);
\draw[thick] (b) -- (e);
\draw[thick,gray!65] (d) --(b);
\draw[thick,gray!65] (c) --(e);

\draw[thick] (a1) -- (b1) -- (c1) -- (a1);
\draw[thick] (a1) -- (e1) -- (d1) -- (a1);
\draw[thick,gray!65] (c1) -- (d1);
\draw[thick, gray!65] (b1) -- (e1);
\draw[thick,gray!65] (d1) --(b1);
\draw[thick,gray!65] (c1) --(e1);

\draw[thick, gray!65] (a2) -- (b2) -- (c2) -- (a2);
\draw[thick] (a2) -- (e2) -- (d2) -- (a2);
\draw[thick] (c2) -- (d2);
\draw[thick] (b2) -- (e2);
\draw[thick,gray!65] (d2) --(b2);
\draw[thick,gray!65] (c2) --(e2);

\egroup

\end{tikzpicture}
    \caption{A coloring for which the conclusion of lemma \ref{prop:Homi} does not hold. We have $\Hom(\varphi)=\Hom(\psi)=\Hom(\varphi+\psi)=\{\{a,b,c\},\{a,d,e\}\}$.}
    \label{fig:homsum}
\end{figure}
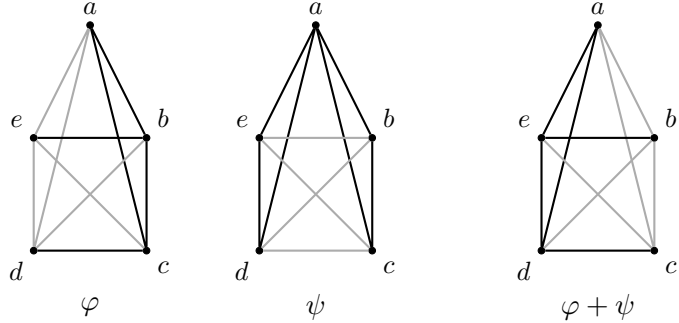

\begin{lem}
\label{teo:deg<=2}
Let $\varphi$ and $\psi$ be colorings on $X$ such that $\varphi H\psi$ and  $\Hom(\varphi+\psi)=\Hom_0(\varphi+\psi)$.  Then 
\[
\deg_{D_1(\varphi+\psi)}(x)\leq 2 \text{ for every $x\in X$}.
\]
\end{lem}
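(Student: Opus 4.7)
The plan is to argue by contradiction. Assume there exists $x \in X$ with $\deg_{D_1(\varphi+\psi)}(x) \geq 3$, and pick three distinct neighbors $y_1, y_2, y_3$ of $x$ in $D_1(\varphi+\psi)$, so that $(\varphi+\psi)\{x, y_i\} = 1$ for $i = 1, 2, 3$. The entire argument then reduces to analyzing the three ``triangle'' edges $\{y_1, y_2\}, \{y_1, y_3\}, \{y_2, y_3\}$ under $\varphi+\psi$, and a clean two-case split kills both possibilities.

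In the first case, suppose all three triangle edges lie in $D_0(\varphi+\psi)$. Then on $V = \{x, y_1, y_2, y_3\}$ we have $D_0(\varphi+\psi) \cap [V]^2 = [\{y_1,y_2,y_3\}]^2$, since the three $x$-edges are in $D_1$ (so not in $D_0$). This is exactly the definition of a claw in $D_0(\varphi+\psi)$ with apex $x$, contradicting Lemma \ref{lem:NoClaws}.

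In the second case, some triangle edge, say $\{y_1, y_2\}$, lies in $D_1(\varphi+\psi)$. Then $(\varphi+\psi)\{x, y_1\} = (\varphi+\psi)\{x, y_2\} = (\varphi+\psi)\{y_1, y_2\} = 1$, so $\{x, y_1, y_2\} \in \Hom_1(\varphi+\psi)$. But the hypothesis $\Hom(\varphi+\psi) = \Hom_0(\varphi+\psi)$ forces $\Hom_1(\varphi+\psi) = \emptyset$ (a size-$3$ set cannot be monochromatic in both colors), which is a contradiction.

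There is no real obstacle here: the whole proof is a short case analysis that leverages the two tools already at hand, namely the absence of claws in $\varphi+\psi$ (Lemma \ref{lem:NoClaws}) and the one-color assumption on $\Hom(\varphi+\psi)$. The only mildly subtle point is recognizing that a degree-$3$ vertex in $D_1(\varphi+\psi)$ forces one of these two forbidden configurations on the triangle of its neighbors, depending on whether that triangle falls entirely in $D_0$ or contains at least one $D_1$-edge.
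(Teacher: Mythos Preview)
Your proof is correct and follows essentially the same approach as the paper: assume a vertex of degree at least $3$ in $D_1(\varphi+\psi)$, then argue that among the three neighbors either some pair forms a $D_1$-edge (yielding a forbidden element of $\Hom_1(\varphi+\psi)$) or all pairs are $D_0$-edges (yielding a claw, contradicting Lemma~\ref{lem:NoClaws}). The only difference is cosmetic---you present the two possibilities as an explicit case split, whereas the paper first rules out the $\Hom_1$ case to conclude all triangle edges lie in $D_0$ and then invokes the claw lemma.
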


\begin{proof}
To simplify the notation, let $D_1 :=D_1(\varphi+\psi)$ and $D_0:= D_0(\varphi+\psi)$. Suppose that the conclusion does not hold.  Then there is $w\in X$ such that $\deg_{D_1}(w)>2$. Take any three edges $\{w,x\},\{w,y\},\{w,z\}\in D_1$.
If $(\varphi+\psi)\{x,y\}=1$, we would have $\{w,x,y\}\in \Hom_1(\varphi+\psi)$. Since there are no homogeneous sets of color $1$, it must be $\{x,y\},\{x,z\},\{y,z\}\in D_0$ and this is,  $\{x,y,z\}\in \Hom(\varphi+\psi)$. Letting $F=(\{x,y,z,w\},[\{x,y,z\}]^2)$ we have that $F$ is a claw, contradicting lemma \ref{lem:NoClaws}.
\end{proof}

The following three lemmas are all consequences of lemma \ref{lem:Pouzet}. The first of them was proved in \cite{Pouzet2013} (with slight differences) but was found independently by us along with lemma \ref{lem:Pouzet}. These lemmas are crucial for the rest of the article.

\begin{lem}[\cite{Pouzet2013}]\label{lem:path1}
Let $\varphi$ and $\psi$ be colorings on $X$ such that $\varphi H \psi$. Suppose there is a path $P=\{x_0,x_1,\dots,x_n\}\subseteq X$ of length $n\geq 2$ on $D_i(\varphi+\psi$). Then
     \begin{equation}\label{eq:path1}
        \varphi\{x_0,x_1\} \neq \varphi\{x_{n-1},x_n\} \iff n \text{ is even.}
    \end{equation}
\end{lem}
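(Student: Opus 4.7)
The plan is to convert the hypothesis that $P$ is a path on $D_i(\varphi+\psi)$ into an alternation statement for $\varphi$ along the edges of $P$, by applying Lemma \ref{lem:Pouzet} to each triple of three consecutive vertices of $P$; the parity equivalence \eqref{eq:path1} will then drop out immediately.

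Set $\beta=\varphi+\psi$. Since $P=\{x_0,\dots,x_n\}$ induces a path on $D_i(\beta)$, the definition \eqref{eq:pathdef} tells us that the only pairs of $[P]^2$ lying in $D_i(\beta)$ are the consecutive ones $\{x_k,x_{k+1}\}$, while every non-consecutive pair $\{x_j,x_k\}\subseteq P$ lies in $D_{1-i}(\beta)$. Hence, for each $1\leq k\leq n-1$, the triple $\{x_{k-1},x_k,x_{k+1}\}$ satisfies
\[
\beta\{x_k,x_{k-1}\}=\beta\{x_k,x_{k+1}\}=i\neq 1-i=\beta\{x_{k-1},x_{k+1}\}.
\]
Applying Lemma \ref{lem:Pouzet} with $x_k$ in the role of the distinguished vertex of the triple gives
\[
\varphi\{x_{k-1},x_k\}\neq\varphi\{x_k,x_{k+1}\}\qquad\text{for every }1\leq k\leq n-1.
\]

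Consequently, writing $c_k:=\varphi\{x_{k-1},x_k\}$ for $k=1,\dots,n$, the sequence $c_1,c_2,\dots,c_n$ alternates between $0$ and $1$. A one-line induction on $k$ (using that there are only two possible values) shows that $c_k=c_1$ iff $k$ is odd, so $c_n\neq c_1$ iff $n$ is even, which is exactly the equivalence \eqref{eq:path1}.

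I do not anticipate any real obstacle: the entire argument is a direct application of Lemma \ref{lem:Pouzet} followed by trivial bookkeeping. The only point requiring a little care is correctly identifying the ``center'' of the triple when invoking the lemma, namely the middle vertex $x_k$, whose two adjacent edges $\{x_k,x_{k-1}\}$ and $\{x_k,x_{k+1}\}$ are the pair with equal $\beta$-value while the non-edge $\{x_{k-1},x_{k+1}\}$ carries the opposite value.
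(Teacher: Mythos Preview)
Your proof is correct and follows essentially the same approach as the paper: both apply Lemma~\ref{lem:Pouzet} to each consecutive triple $\{x_{k-1},x_k,x_{k+1}\}$ to obtain $\varphi\{x_{k-1},x_k\}\neq\varphi\{x_k,x_{k+1}\}$, and then read off the parity statement. The only cosmetic difference is that the paper packages this as an induction on $n$ while you unroll it directly into an alternation of the sequence $c_k=\varphi\{x_{k-1},x_k\}$.
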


\begin{proof}
We assume w.l.o.g.~ that $i=1$. We proceed by induction on $n$. Suppose $n=2$. Since $P$ is a path on $D_1(\varphi+\psi)$, 
\[
(\varphi+\psi)\{x_0,x_1\} =(\varphi+\psi)\{x_1,x_2\} \neq (\varphi+\psi)\{x_0,x_2\}.
\]
Then by lemma \ref{lem:Pouzet} we get that  $\varphi\{x_0,x_1\}\neq\varphi\{x_1,x_2\}$.

Let $n\geq 2$ and $P=\{x_0,x_1,\dots, x_{n+1}\}$ be a path of length $n+1$ on $D_1(\varphi+\psi)$.  Suppose that equation \eqref{eq:path1} holds for $n$.  Consider the path $P_1=\{x_0,\dots,x_n\}$ and the path $P_2=\{x_{n-1}, x_n, x_{n+1}\}$ on $D_1(\varphi+\psi)$. Both paths are on $D_1(\varphi+\psi)$ and have length $n$ and length $2$, respectively. By the induction hypothesis (applied to $P_1$) $\varphi\{x_0,x_1\}=\varphi\{x_{n-1},x_n\}$ if and only if $n$ is odd. 

On the other hand, by the same argument as in the case $n=2$ but now applied to $P_2$, we get $\varphi\{x_{n-1},x_{n}\} \neq \varphi\{x_n,x_{n+1}\}$. We can then conclude that
\[
\begin{alignedat}{2}
\varphi\{x_0,x_1\} \neq \varphi\{x_n,x_{n+1}\} &\iff \varphi\{x_0,x_1\} = \varphi\{x_{n-1},x_n\} \\ &\iff n \text{ is odd }\\
        &\iff n+1\text{ is even.}
\end{alignedat} 
\]
\end{proof}
\begin{cor}
\label{cor:NoOddCycle} 
Let $\varphi$ and $\psi$ be colorings on a set $X$ such that $\varphi H \psi$. Any component of $X$ on $D_i(\varphi+\psi)$ that induces a cycle is of even length.
\end{cor}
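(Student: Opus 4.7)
The plan is to let $Z=\{x_0,x_1,\ldots,x_n\}$ be a component of $D_i(\varphi+\psi)$ inducing a cycle of length $n+1\geq 3$, and to show that $n+1$ must be even. The main idea is that Lemma \ref{lem:path1}, applied to length-$2$ paths sitting inside the cycle, forces the $\varphi$-colors of consecutive cycle edges to alternate, and such an alternation can close up only when the length is even.

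I would first handle the case $n+1\geq 4$. For each index $k$ (taken mod $n+1$), the three vertices $\{x_{k-1},x_k,x_{k+1}\}$ induce a path of length $2$ on $D_i(\varphi+\psi)$: the edges $\{x_{k-1},x_k\}$ and $\{x_k,x_{k+1}\}$ are cycle edges, while the potential chord $\{x_{k-1},x_{k+1}\}$ is not a cycle edge (since $n+1\geq 4$), hence it lies outside $D_i(\varphi+\psi)$, because the cycle definition forces $[Z]^2\cap D_i(\varphi+\psi)$ to equal exactly the set of cycle edges. Applying Lemma \ref{lem:path1} with $n=2$ yields $\varphi\{x_{k-1},x_k\}\neq \varphi\{x_k,x_{k+1}\}$. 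Reading this around the cycle, the sequence $\varphi\{x_0,x_1\},\varphi\{x_1,x_2\},\ldots,\varphi\{x_{n-1},x_n\},\varphi\{x_n,x_0\}$ of $n+1$ values in $\{0,1\}$ has every pair of adjacent entries distinct, with the last entry adjacent to the first; such an alternating cyclic sequence in two colors exists only when $n+1$ is even.

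The remaining case is $n+1=3$, where $Z$ is a triangle contained in $D_i(\varphi+\psi)$. Here no $3$-subset of $Z$ is a path, so Lemma \ref{lem:path1} does not apply directly. Under the implicit assumption of this section that $|X|\geq 4$, pick any $y\in X\setminus Z$; because $Z$ is a component, $(\varphi+\psi)\{y,x_j\}=1-i$ for $j=0,1,2$. For each pair $\{j,k\}\subseteq\{0,1,2\}$ the triple $\{y,x_j,x_k\}$ satisfies $(\varphi+\psi)\{y,x_j\}=(\varphi+\psi)\{y,x_k\}\neq (\varphi+\psi)\{x_j,x_k\}$, so Lemma \ref{lem:Pouzet} delivers $\varphi\{y,x_j\}\neq\varphi\{y,x_k\}$. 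This would force three pairwise distinct values of $\varphi\{y,x_j\}$ in $\{0,1\}$, which is impossible. Hence triangles cannot appear as components, and the case $n+1=3$ is vacuous.

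The main obstacle is precisely the triangle case: the clean length-$2$ path alternation that handles all longer cycles collapses when every $3$-subset of the cycle is itself a triangle, and one is forced to invoke an external vertex together with Lemma \ref{lem:Pouzet} directly. Everything else is a parity/alternation count around the cycle.
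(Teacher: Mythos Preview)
Your argument is correct and a little cleaner than the paper's. Both proofs dispose of the triangle case by bringing in an external vertex: the paper packages this as an application of Lemma~\ref{lem:NoClaws} (a triangle component together with a fourth vertex yields a claw in $D_{1-i}(\varphi+\psi)$), while you unfold that lemma and apply Lemma~\ref{lem:Pouzet} directly to the three triples $\{y,x_j,x_k\}$; these are the same argument in different dress, and both tacitly need $|X|\ge 4$. For odd cycles of length at least $5$ the two routes diverge. The paper selects three specific subpaths of lengths $3$, $2n-1$, and $2$, applies Lemma~\ref{lem:path1} to each, and obtains three equations that are jointly contradictory. You instead slide a length-$2$ window all the way around the cycle, using only the base case of Lemma~\ref{lem:path1} at each step to force the $\varphi$-colors of consecutive cycle edges to alternate, from which the parity conclusion is immediate. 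Your decomposition is more elementary (it never invokes Lemma~\ref{lem:path1} beyond the case $n=2$, which is just Lemma~\ref{lem:Pouzet}) and more symmetric; the paper's version trades that directness for brevity by leaning on the already-proved inductive lemma.
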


\begin{proof}
We assume w.l.o.g.~ that $i=1$.
Let $C$ be a component of $X$ on $D_1(\varphi+\psi)$. 
From lemma \ref{lem:NoClaws} we already know that $C$ cannot induce a cycle of length 3, otherwise, there would be a claw on $D_1(\varphi+\psi)$.

Let $G$ be the graph induced on $D_1(\varphi+\psi)$ by $C$. Suppose $G$ is a cycle of length $2n+1$ with $n\geq 2$. Let $C=\{x_0,x_1,\dots, x_{2n}\}$. There are paths $P_1$, $P_2$ and $P_3$ on $D_1(\varphi+\psi)$ induced by the sets $\{x_0,x_1,x_2,x_3\}$, $\{x_2,x_3,\dots, x_{2n},x_0\}$ and $\{x_{2n},x_0,x_1\}$, respectively. Given that the length of $P_1$ is 3, the length of $P_2=2n-1$ and the length of $P_3=2$, by lemma \ref{lem:path1}, we have that 
\begin{equation}\label{eq:noOddcycle}
\begin{alignedat}{2}
   \varphi\{x_0,x_1\} &= \varphi\{x_{2},x_3\},\\
    \varphi\{x_2,x_3\} &= \varphi\{x_{2n},x_0\},\\
    \varphi\{x_{2n},x_0\} &\neq \varphi\{x_0,x_1\}.
\end{alignedat}
\end{equation}
Evidently, the third equation in \eqref{eq:noOddcycle} contradicts the previous two. We conclude that $C$ does not induce an odd cycle.
\end{proof}

\begin{lem}
\label{lem:path2}
Let $\varphi$ and $\psi$ be colorings on  $X$ such that $\varphi H\psi$. If $P=\{x_0,\dots,x_n\}$ is a path of length $n$ on $D_i(\varphi+\psi)$. Then 
\begin{equation}\label{eq:path2}
              \varphi\{x_0,x_2\} = \varphi\{x_0,x_n\} \iff n \text{ is even.}
\end{equation}
\end{lem}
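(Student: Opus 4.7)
The plan is to apply Lemma \ref{lem:Pouzet} to a carefully chosen family of triples, all sharing the vertex $x_0$. As in Lemma \ref{lem:path1} and Corollary \ref{cor:NoOddCycle}, I may assume without loss of generality that $i=1$. The case $n=2$ is immediate, since then $x_n = x_2$ and $n$ is even, so both sides of the equivalence are trivially true.

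For $n \geq 3$, the key observation comes from the definition of a path in equation \eqref{eq:pathdef}: the edge set of $P$ equals $[P]^2 \cap D_1(\varphi+\psi)$. Hence, for any $k$ with $2 \leq k \leq n-1$, both chords $\{x_0, x_k\}$ and $\{x_0, x_{k+1}\}$ lie outside $D_1(\varphi+\psi)$, so
\[
(\varphi+\psi)\{x_0,x_k\} = (\varphi+\psi)\{x_0,x_{k+1}\} = 0,
\]
while $\{x_k, x_{k+1\}}$ is an edge of $P$, giving $(\varphi+\psi)\{x_k,x_{k+1}\} = 1$. The triple $\{x_0, x_k, x_{k+1}\}$ therefore satisfies the hypothesis of Lemma \ref{lem:Pouzet}, and I conclude that $\varphi\{x_0, x_k\} \neq \varphi\{x_0, x_{k+1}\}$.

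Running this over $k = 2, 3, \ldots, n-1$, I obtain that the sequence
\[
\varphi\{x_0,x_2\},\ \varphi\{x_0,x_3\},\ \ldots,\ \varphi\{x_0,x_n\}
\]
alternates between $0$ and $1$. Consequently, $\varphi\{x_0, x_k\} = \varphi\{x_0, x_2\}$ if and only if $k$ has the same parity as $2$, that is, if and only if $k$ is even. Taking $k = n$ yields exactly \eqref{eq:path2}. There is no real obstacle here — the proof is a direct bookkeeping argument on parities once Lemma \ref{lem:Pouzet} has been invoked on the right triples; the only subtlety is being explicit that the chords do not lie in $D_1(\varphi+\psi)$, which comes for free from the path definition.
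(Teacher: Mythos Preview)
Your proof is correct and follows essentially the same route as the paper's: both apply Lemma~\ref{lem:Pouzet} to the triples $\{x_0,x_k,x_{k+1}\}$ (using that the chords $\{x_0,x_k\}$ lie in $D_0(\varphi+\psi)$ by the definition of a path) to get $\varphi\{x_0,x_k\}\neq\varphi\{x_0,x_{k+1}\}$, and then read off the parity. The only cosmetic difference is that the paper packages this as an induction on $n$ while you unroll it into a direct alternation argument.
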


\begin{proof} We assume w.l.o.g.~ that $i=1$.
For $n=2$ the equation holds trivially.

Suppose that equation \eqref{eq:path2} holds for $n\geq 2$.
Let $P=\{x_0,\dots,x_{n+1}\}$ be a path of length $n+1$ on $D_1(\varphi+\psi)$. Then, by the induction hypothesis
\[
\varphi\{x_0,x_2\} \neq \varphi\{x_0,x_n\} \iff n \text{ is odd.}
\]

Since $P$ is path on $D_1(\varphi+\psi)$, the only vertices that are $D_1-$ neighbors of $x_i$ are $x_{i-1}$ (if $i> 0$) and $x_{i+1}$ (if $i<n$). Then   
\[
(\varphi+\psi)\{x_0,x_n\} = (\varphi+\psi)\{x_0,x_{n+1}\} \neq (\varphi+\psi)\{x_n,x_{n+1}\}.
\]
By lemma \ref{lem:Pouzet}, we have that  $\varphi\{x_0,x_n\} \neq \varphi\{x_0,x_{n+1}\}$. So, 
    \[
    \begin{alignedat}{2}
         \varphi\{x_0,x_2\}=\varphi\{x_0,x_{n+1}\} &\iff \varphi\{x_0,x_2\} \neq \varphi\{x_0,x_n\} \\
         &\iff n \text{ is odd}\\
         &\iff n + 1 \text{ is even.}
    \end{alignedat}
    \]
This proves equation \eqref{eq:path2} for every $n\in \mathbb{N}$.
\end{proof}

\begin{lem}
\label{lem:evenpaths}
Let $\varphi$ and $\psi$ be colorings on $X$ such that $\varphi H\psi$. If $P=\{x_0,x_1,\dots,x_n\}$ is a path of length $n\geq 3$ on $D_c(\varphi+\psi)$, then 
\begin{equation}
\label{eq:evenpaths}
\varphi\{x_0,x_2\} = \varphi\{x_i,x_{i+2}\}=1-\varphi\{x_i,x_{i+3}\} = \varphi\{x_{i+1},x_{i+3}\}, \text{ for $0\leq i \leq n-3$.}
\end{equation}

\end{lem}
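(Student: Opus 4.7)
The plan is to reduce everything to Lemma \ref{lem:path2} applied to the length-$3$ subpaths of $P$, read in both directions. Fix $i$ with $0 \le i \le n-3$ and consider $Q_i = \{x_i, x_{i+1}, x_{i+2}, x_{i+3}\}$, which is a path of length $3$ on $D_c(\varphi+\psi)$. Applying Lemma \ref{lem:path2} to $Q_i$ with $x_i$ playing the role of $x_0$, and using that $3$ is odd, I get
\[
\varphi\{x_i,x_{i+2}\}\neq \varphi\{x_i,x_{i+3}\}, \quad \text{i.e.,} \quad \varphi\{x_i,x_{i+2}\}=1-\varphi\{x_i,x_{i+3}\}.
\]
Reversing $Q_i$ (so $x_{i+3}$ plays the role of $x_0$) and applying Lemma \ref{lem:path2} again yields $\varphi\{x_{i+1},x_{i+3}\}\neq \varphi\{x_i,x_{i+3}\}$; equivalently, $\varphi\{x_{i+1},x_{i+3}\}=1-\varphi\{x_i,x_{i+3}\}$. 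Concatenating these two identities gives
\[
\varphi\{x_i,x_{i+2}\} \;=\; 1-\varphi\{x_i,x_{i+3}\} \;=\; \varphi\{x_{i+1},x_{i+3}\},
\]
which is precisely the last three members of the claimed equality, and in particular establishes the one-step shift $\varphi\{x_i,x_{i+2}\}=\varphi\{x_{i+1},x_{i+3}\}$ for every $0 \le i \le n-3$.

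To complete the statement I still need $\varphi\{x_0,x_2\}=\varphi\{x_i,x_{i+2}\}$. For this I would iterate the shift: a straightforward induction on $i$ gives
\[
\varphi\{x_0,x_2\}=\varphi\{x_1,x_3\}=\cdots=\varphi\{x_i,x_{i+2}\},
\]
valid for all $i\le n-2$, and in particular for $i\le n-3$ as required.

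The only real subtlety is the bookkeeping: a single application of Lemma \ref{lem:path2} only anchors information at one endpoint of a subpath, so one has to use both orientations of $Q_i$ in order to compare $\varphi\{x_i,x_{i+2}\}$ and $\varphi\{x_{i+1},x_{i+3}\}$ via the common value $\varphi\{x_i,x_{i+3}\}$. Once this two-directional use of the lemma is in place, the global statement across $i$ is a routine induction and there is no deeper obstacle.
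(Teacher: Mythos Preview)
Your argument is correct. Both your proof and the paper's reduce the statement to the analysis of length-$3$ subpaths and then chain the resulting local identities, but the tools and the organization differ slightly. The paper proceeds by induction on the length $n$ of $P$: the inductive hypothesis handles all $i\le n-4$, and for the new index $i=n-3$ it applies Lemma~\ref{lem:Pouzet} directly to the two non-homogeneous triples inside $P'=\{x_{n-3},x_{n-2},x_{n-1},x_n\}$ to obtain $\varphi\{x_{n-3},x_{n-1}\}=1-\varphi\{x_{n-3},x_n\}=\varphi\{x_{n-2},x_n\}$. You instead invoke the already-proved Lemma~\ref{lem:path2} on each $Q_i$ in both orientations, which packages those two applications of Lemma~\ref{lem:Pouzet} into a single higher-level step, and then you chain over $i$ rather than inducting on $n$. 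Your route is a bit cleaner since it reuses Lemma~\ref{lem:path2} rather than redoing its base case; the paper's route is marginally more self-contained. Either way, the content is the same.
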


\begin{proof}
Assume w.l.o.g. we suppose that  $c=1$. The argument is by induction on the length of $P$. Suppose that equation \eqref{eq:evenpaths} holds for paths  of length $n-1$ and consider a path $P=\{x_0,\dots, x_n\}$ on $D_1(\varphi+\psi)$. 
Since $P\setminus\{x_n\}$ also induces a path on $D_1(\varphi+\psi)$,  we have that by the inductive hypothesis that 
\[
\varphi\{x_0,x_2\} =\varphi\{x_i,x_{i+2}\}=1-\varphi\{x_i,x_{i+3}\} = \varphi\{x_{i+1},x_{i+3}\} \text{ for every } i \in \{0,\dots,n-4\}.
\] 
On the other hand, let $P'=\{x_{n-3}, x_{n-2}, x_{n-1}, x_n\}$, then $P'$  is also a path on $D_1(\varphi+\psi)$. Thus  
\begin{align*}
    (\varphi+\psi)\{x_{n-3},x_n\}&=(\varphi+\psi)\{x_{n-2},x_n\} = 1-(\varphi+\psi)\{x_{n-3},x_{n-2}\}\\
    (\varphi+\psi)\{x_{n-3},x_n\}&=(\varphi+\psi)\{x_{n-3},x_{n-1}\} = 1-(\varphi+\psi)\{x_{n-1},x_n\}.
\end{align*} 
Then, by lemma \ref{lem:Pouzet}, we have 
\[
\varphi\{x_{n-3},x_{n-1}\} =1- \varphi\{x_{n-3},x_n\} = \varphi\{x_{n-2},x_n\}.
\]
We have established \eqref{eq:evenpaths}. 
\end{proof}

\begin{figure}[h]
    \centering
\begin{tikzpicture}[scale=1.5]

\node[below] (0) at (0,0) {\footnotesize$0$};
\node[above] (1) at (0,1) {\footnotesize$1$};
\node[below] (2) at (1,0) {\footnotesize$2$};
\node[above] (3) at (1,1) {\footnotesize$3$};
\node[below] (4) at (2,0) {\footnotesize $4$};
\node[above] (5) at (2,1) {\footnotesize $5$};

\node[below] (suma) at (1,-0.5) {\small $\varphi+\psi$};

\node[below] (a) at (3.5,1) {\footnotesize$0$};
\node[above] (b) at (3.5,2) {\footnotesize$1$};
\node[below] (c) at (4.5,1) {\footnotesize$2$};
\node[above] (d) at (4.5,2) {\footnotesize$3$};
\node[below] (e) at (5.5,1) {\footnotesize $4$};
\node[above] (f) at (5.5,2) {\footnotesize $5$};

\node[below] (i) at (3.5,-1) {\footnotesize$0$};
\node[above] (ii) at (3.5,0) {\footnotesize$1$};
\node[below] (iii) at (4.5,-1) {\footnotesize$2$};
\node[above] (iv) at (4.5,0) {\footnotesize$3$};
\node[below] (v) at (5.5,-1) {\footnotesize $4$};
\node[above] (vi) at (5.5,0) {\footnotesize $5$};
\node[right] at (6,1.5) {\small $\varphi$};
\node[right] at (6,-0.5) {\small $\psi$};

\bgroup
\tikzstyle{every node}=[circle, draw, fill=black,inner sep=0pt, minimum width=2.5pt]
\node (0) at (0,0) {};
\node (1) at (0,1) {};
\node (2) at (1,0) {};
\node (3) at (1,1) {};
\node (4) at (2,0) {};
\node (5) at (2,1) {};

\node (a) at (3.5,1) {};
\node (b) at (3.5,2) {};
\node (c) at (4.5,1) {};
\node (d) at (4.5,2) {};
\node (e) at (5.5,1) {};
\node (f) at (5.5,2) {};

\node (i) at (3.5,-1) {};
\node (ii) at (3.5,0) {};
\node (iii) at (4.5,-1) {};
\node (iv) at (4.5,0) {};
\node (v) at (5.5,-1) {};
\node (vi) at (5.5,0) {};

\draw[thick] (0) to (1) to (2) to (3) to (4) to (5);

\draw[thick] (a) to (b);
\draw[thick, gray!50] (b) to (c);
\draw[thick] (c) to (d);
\draw[thick, gray!50] (d) to (e);
\draw[thick] (e) to (f);

\draw[thick, gray!50] (i) to (ii);
\draw[thick] (ii) to (iii);
\draw[thick, gray!50] (iii) to (iv);
\draw[thick] (iv) to (v);
\draw[thick, gray!50] (v) to (vi);

\draw[thick] (a) -- (c) -- (e) to[out=-150, in=-30] (a);
\draw[thick] (b) -- (d) -- (f) to[out=150,in=30] (b) ;

\draw[thick] (i) -- (iii) -- (v) to[out=-150, in=-30] (i) ;
\draw[thick] (ii) -- (iv) -- (vi) to[out=150,in=30] (ii);

\draw[thick, gray!50] (b) to (e) -- (d) -- (a) -- (f) -- (c) ;

\draw[thick, gray!50] (v) -- (ii);
\draw[thick, gray!50] (i) -- (vi);
\draw[thick, gray!50] (i) -- (iv);
\draw[thick,gray!50]  (iii)--(vi);

\egroup

\end{tikzpicture}    
\caption{If $\varphi$ and $\psi$ are colorings such that $\varphi H\psi$ and there is a component $C$ that is a path (shown on the left) then lemmas \ref{lem:path1}, \ref{lem:path2} and \ref{lem:evenpaths}  determine $\varphi$ and $\psi$ on that component (shown on the right).}
    \label{fig:boolSum}
\end{figure}
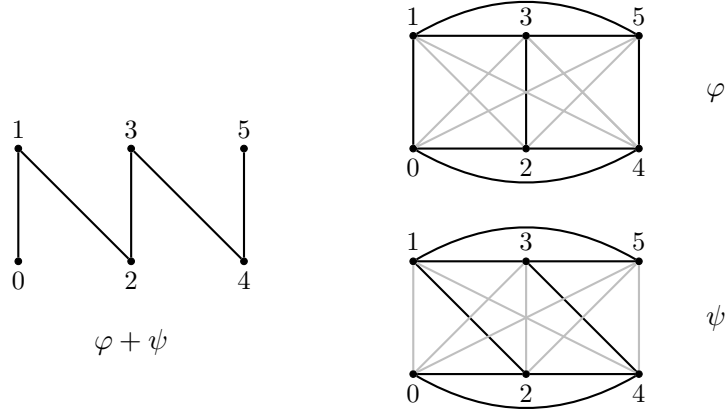

\begin{figure}[h]
    \centering
\begin{tikzpicture}[scale=1.5]


\node[below] (0) at (0,-5) {\footnotesize$0$};
\node[above] (1) at (0,-4) {\footnotesize$1$};
\node[below] (2) at (1,-5) {\footnotesize$2$};
\node[above] (3) at (1,-4) {\footnotesize$3$};
\node[below] (4) at (2,-5) {\footnotesize $4$};
\node[above] (5) at (2,-4) {\footnotesize $5$};

\node[below] (suma) at (1,-5.5) {\small $\varphi+\psi$};

\node[below] (a) at (3.5,-4) {\footnotesize$0$};
\node[above] (b) at (3.5,-3) {\footnotesize$1$};
\node[below] (c) at (4.5,-4) {\footnotesize$2$};
\node[above] (d) at (4.5,-3) {\footnotesize$3$};
\node[below] (e) at (5.5,-4) {\footnotesize $4$};
\node[above] (f) at (5.5,-3) {\footnotesize $5$};

\node[below] (i) at (3.5,-6) {\footnotesize$0$};
\node[above] (ii) at (3.5,-5) {\footnotesize$1$};
\node[below] (iii) at (4.5,-6) {\footnotesize$2$};
\node[above] (iv) at (4.5,-5) {\footnotesize$3$};
\node[below] (v) at (5.5,-6) {\footnotesize $4$};
\node[above] (vi) at (5.5,-5) {\footnotesize $5$};
\node[right] at (6,-3.5) {\small $\varphi$};
\node[right] at (6,-5.5) {\small $\psi$};

\bgroup
\tikzstyle{every node}=[circle, draw, fill=black,inner sep=0pt, minimum width=2.5pt]


\node (0) at (0,-5) {};
\node (1) at (0,-4) {};
\node (2) at (1,-5) {};
\node (3) at (1,-4) {};
\node (4) at (2,-5) {};
\node (5) at (2,-4) {};

\node (a) at (3.5,-4) {};
\node (b) at (3.5,-3) {};
\node (c) at (4.5,-4) {};
\node (d) at (4.5,-3) {};
\node (e) at (5.5,-4) {};
\node (f) at (5.5,-3) {};

\node (i) at (3.5,-6) {};
\node (ii) at (3.5,-5) {};
\node (iii) at (4.5,-6) {};
\node (iv) at (4.5,-5) {};
\node (v) at (5.5,-6) {};
\node (vi) at (5.5,-5) {};

\draw[thick] (0) to (1) to (2) to (3) to (4) to (5) -- (0);

\draw[thick] (a) to (b);
\draw[thick, gray!50] (b) to (c);
\draw[thick] (c) to (d);
\draw[thick, gray!50] (d) to (e);
\draw[thick] (e) to (f);

\draw[thick, gray!50] (i) to (ii);
\draw[thick] (ii) to (iii);
\draw[thick, gray!50] (iii) to (iv);
\draw[thick] (iv) to (v);
\draw[thick, gray!50] (v) to (vi);

\draw[thick] (a) -- (c) -- (e) to[out=-150, in=-30] (a);
\draw[thick] (b) -- (d) -- (f) to[out=150,in=30] (b) ;

\draw[thick] (i) -- (iii) -- (v) to[out=-150, in=-30] (i);
\draw[thick] (ii) -- (iv) -- (vi) to[out=150,in=30] (ii);
\draw[thick] (vi) -- (i);
\draw[thick, gray!50] (b) to (e) -- (d) -- (a) -- (f) -- (c) ;

\draw[thick, gray!50] (v) -- (ii);
\draw[thick, gray!50] (i) -- (iv);
\draw[thick,gray!50]  (iii)--(vi);
\
\egroup

\end{tikzpicture}    \caption{If $\varphi$ and $\psi$ are such that $\varphi H\psi$ and there is a component $C$ that is cycle of even length (shown on the left) then lemmas \ref{lem:path1}, \ref{lem:path2} and \ref{lem:evenpaths} determine $\varphi$ and $\psi$ on that component (shown on the right).}
    \label{fig:boolSm2}
\end{figure}
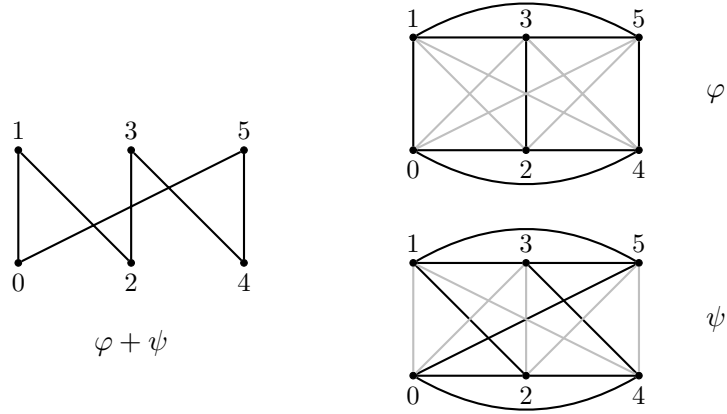

The following fact is probably known, we include a proof for the sake of completeness. 

\begin{prop}\label{prop:deg2}
Let $G$ be a graph on  $X$ such that $\deg_G(x)\leq 2$ for all $x$. Then every component of $G$ is  either a path or a cycle. 
\end{prop}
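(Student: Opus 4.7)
The plan is the classical \emph{walk-building} argument, tailored to the paper's definition of induced path and induced cycle (equations \eqref{eq:pathdef} and \eqref{eq:cycle1}). Fix a component $C$ of $G$. Since every component has at least two vertices and $C$ is connected, no vertex of $C$ is isolated, so together with the hypothesis $\deg_G(x)\leq 2$ each $x\in C$ has degree exactly $1$ or $2$. I will split into two cases depending on whether a degree-$1$ vertex exists.

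Case 1: some $v_0\in C$ has $\deg_G(v_0)=1$. I would construct a sequence $v_0,v_1,v_2,\ldots$ by letting $v_1$ be the unique neighbor of $v_0$ and, inductively, $v_{i+1}$ be the neighbor of $v_i$ different from $v_{i-1}$, stopping at the first index $n$ with $\deg_G(v_n)=1$. Three things need to be verified: (i) the $v_i$ are pairwise distinct, since a repetition $v_i=v_j$ with $j\geq 1$ would force $v_j$ to have three distinct neighbors among $v_{j-1},v_{j+1},v_{i-1}$, contradicting $\deg_G(v_j)\leq 2$; (ii) the set $P:=\{v_0,\ldots,v_n\}$ has no chord in $A$, because any edge $\{v_i,v_j\}$ with $|i-j|>1$ would again give one of these vertices three neighbors; and (iii) no vertex of $C\setminus P$ is adjacent in $G$ to any $v_i$, because all neighbors of each $v_i$ have already been accounted for by $v_{i-1},v_{i+1}$ (or by $v_1$ when $i=0$, and by $v_{n-1}$ when $i=n$). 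Connectivity of $C$ then forces $P=C$, and the construction shows that $P$ induces on $A$ exactly the edges $\{\{v_k,v_{k+1}\}:0\leq k<n\}$, so $C$ is a path in the sense of \eqref{eq:pathdef}.

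Case 2: every vertex of $C$ has degree $2$. Pick $v_0\in C$, a neighbor $v_1$, and extend the sequence $v_0,v_1,v_2,\ldots$ as above; the process never stops for degree reasons. Using that $C$ is finite, the sequence must repeat; let $v_i$ be the first repeated vertex and $v_j$ the earlier occurrence. If $j\geq 1$, then $v_j$ already has two declared neighbors $v_{j-1},v_{j+1}$ in the sequence, so $v_{i-1}$ would have to equal one of them, contradicting the minimality of $i$. Hence $j=0$, and one checks $i\geq 3$ (the graph is simple and $v_0$ has only two neighbors $v_1$ and $v_{i-1}$). The same ``degree is used up'' argument as in Case 1 rules out chords in $Z:=\{v_0,\ldots,v_{i-1}\}$ and forbids edges from $Z$ to $C\setminus Z$, so by connectivity $Z=C$ and $C$ is a cycle in the sense of \eqref{eq:cycle1}.

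I expect the main (and really only) subtlety to be the finiteness assumption used in Case 2 to close the walk into a cycle: the analogous statement fails for infinite $C$, where a $2$-regular connected graph can be a two-way infinite path rather than a finite cycle. In the paper's setting this is resolved by the fact that ``path'' and ``cycle'' are defined as finite objects in Section~\ref{sec:prelim}, so the proposition is implicitly read as: each component is finite and is an induced path or an induced cycle (or, for infinite $X$, one must either restrict to finite components or extend the notion of path to allow one- and two-way infinite paths, with the same case analysis going through verbatim).
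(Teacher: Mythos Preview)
Your argument is correct and uses the same case split as the paper (a degree-$1$ vertex exists versus all vertices have degree $2$), with Case~1 being essentially identical. In Case~2 the paper takes a slightly different tack: rather than building a walk until a repetition occurs, it deletes a vertex $y$ of degree $2$, which drops the degrees of its two neighbours to $1$, and then invokes Case~1 on the resulting graph; re-inserting $y$ closes the path into a cycle (finite case) or glues two one-way infinite paths into a two-way infinite path (infinite case). Your walk-and-repeat argument is just as valid and arguably more direct for finite $C$.

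Your closing caveat about finiteness is exactly on point, and the paper resolves it the way you anticipate: it explicitly allows one-way infinite paths in Case~1 (the greedy extension ``go[es] on indefinitely'') and two-way infinite paths in Case~2.2, rather than restricting to finite components. So in the paper's reading, ``path'' in the conclusion tacitly includes the infinite variants. You should make the same allowance explicit in your Case~1, since as written your walk ``stop[s] at the first index $n$ with $\deg_G(v_n)=1$'' without saying what happens when no such $n$ exists.
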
 
\begin{proof} Let $C$ be a component of $G$.

\textbf{Case 1:}  Suppose there is $x_0\in C$ such that $\deg_{G}(x_0)=1$. Then there exists $x_1\in C$ such that $\{x_0,x_1\}\in E(G)$. Let  $C_1=\{x_0,x_1\}$. Suppose we have defined a path $C_k=\{x_0,x_1,\dots,x_k\}\subseteq C$ in $G$ for some $k\geq 1$. If $\deg_{G}(x_k)=1$, then $C_k$ is a component and thus $C=C_k$. Otherwise,  $\deg_{G}(x_k)=2$ and there is $x_{k+1}\in V(G)\setminus C$ with$\{x_k,x_{k+1}\}\in E(G)$. Let $C_{k+1}=C_k\cup\{x_{k+1}\}$. The process can either terminate after some finite number of steps $n$, in which case $C=C_n$, or go on indefinitely, in which case $C$ is an infinite path.

\bigskip 

\textbf{Case 2:} Suppose every $y\in C$ has degree $2$. Let $G'$ be the graph resulting from the removal from $G$ of some fixed element $y$. Since $\deg_{G}(y)=2$, there are $x$ and $z$ in $C$ such that $\{x,y\},\{y,z\} \in G$ and there are no other such elements in $X$. Then we have $\deg_{G'}(x)=\deg_{G'}(z)=1$, and $\deg_{G'}(w)=2$ for every $w\in C\setminus\{x,y,z\}$.
We consider two further subcases:

\textbf{Case 2.1:} $C$ is finite. By the first case we know that $G'$ is a finite path of length $|E(G)|-2$ from $y$ to $z$. The graph induced by $\{x,y,z\}$ on $E(G)$ is a path of length $2$. This means that $C$ induces a cycle.

\medskip 

\textbf{Case 2.2:} $C$ is infinite. Since $\deg_{G'}(x)=\deg_{G'}(z)=1$, then $x$ and $z$ belong to different components of $C\setminus\{y\}$ in $E(G')$. All other elements $w$ of $C\setminus\{x,y,z\}$ have degree 2 on $G'$, so each component must be an infinite path. There are only two components of $C\setminus\{y\}$ in $E(G')$ since $C$ was a connected component. Let $C_1$ be the component that induces the infinite path containing $x$, and $C_2$ the one containing $z$. On the other hand, when removing $y$ from $G$ we removed edges $\{x,y\}$ and $\{y,z\}$, so $G$ is the union of a path of length $2$ from $x$ to $z$ and two infinite paths with end vertices $x$ and $z$, respectively.
We conclude that $G$ is a two-way infinite path. 

\end{proof}

Now we show a  crucial result. The components of the Boolean sum  of two equivalent colorings of a large enough size (so that lemma \ref{prop:Homi} holds), in one of $D_1(\varphi+\psi)$ or $D_0(\varphi+\psi)$, are paths of arbitrary length or cycles of even length. Our proof is different than the one originally presented in \cite[Theorem 1.2]{Pouzetetal2011}.

\begin{teo}\cite{Pouzetetal2011}
\label{teo:Pouzetetal} 
Let $\varphi$ and $\psi$ be two colorings on $X$ such that $\Hom(\varphi+\psi)=\Hom_0(\varphi+\psi)$. If $\varphi H \psi$, then  the components of $X$ on $D_1(\varphi+\psi)$ are paths or cycles of even length.
\end{teo}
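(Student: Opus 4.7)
The plan is to assemble this theorem directly from the machinery already developed, since the three preceding results essentially do all the work.

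First I would invoke Lemma \ref{teo:deg<=2}, which under the hypothesis $\Hom(\varphi+\psi)=\Hom_0(\varphi+\psi)$ and $\varphi H \psi$ yields $\deg_{D_1(\varphi+\psi)}(x) \leq 2$ for every $x\in X$. This is the substantive input: it rules out any branching in the graph $(X, D_1(\varphi+\psi))$, and the absence of branching is what forces the very restricted shape of components.

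Next I would apply Proposition \ref{prop:deg2} to the graph $(X, D_1(\varphi+\psi))$: since the maximum degree is at most $2$, every connected component is either a path (finite or one/two-way infinite) or a cycle. This reduces the problem to ruling out odd cycles.

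Finally, Corollary \ref{cor:NoOddCycle} already establishes that, for any pair of colorings with $\varphi H \psi$, any component of $X$ on $D_1(\varphi+\psi)$ that induces a cycle must have even length (this was proved by combining the claw-freeness from Lemma \ref{lem:NoClaws} to exclude triangles, then using the parity information from Lemma \ref{lem:path1} to exclude odd cycles of length at least $5$). Combining the two possibilities from Proposition \ref{prop:deg2} with this parity restriction on cycles yields exactly the claimed dichotomy: each component is a path or a cycle of even length. There is no real obstacle here since all the heavy lifting (the no-claw argument, the parity-of-path argument, and the degree bound that leverages $\Hom(\varphi+\psi)=\Hom_0(\varphi+\psi)$) has already been carried out; the theorem is simply the synthesis of Lemma \ref{teo:deg<=2}, Proposition \ref{prop:deg2}, and Corollary \ref{cor:NoOddCycle}.
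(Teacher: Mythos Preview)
Your proposal is correct and follows essentially the same approach as the paper: the paper's proof is precisely the three-step synthesis you describe, invoking Lemma~\ref{teo:deg<=2} for the degree bound, Proposition~\ref{prop:deg2} for the path-or-cycle dichotomy, and Corollary~\ref{cor:NoOddCycle} to exclude odd cycles.
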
 
\proof
From lemma \ref{teo:deg<=2} know that $\deg_{D_i(\varphi+\psi)}(x)\leq 2$ for every $x\in X$ and some $i\in \{0,1\}$. Then using proposition \ref{prop:deg2} on the graph $(X,D_i(\varphi+\psi))$ we deduce that every component of $(X,D_i(\varphi+\psi))$ is either a path or a cycle. Moreover,   corollary \ref{cor:NoOddCycle} says that if a component of $(X,D_i(\varphi+\psi))$ is a cycle, then it has even  length.
\endproof

We now turn our attention to the other main result of this section. We first establish a fact about pairs of $H$-equivalent colorings. 

\begin{lem}
\label{prop:path4}
Let $\varphi$ and $\psi$ be colorings on $X$ with $\varphi H \psi$ and  $\Hom(\varphi+\psi)=\Hom_0(\varphi+\psi)$. If $C$ is a component of $X$ on $D_1(\varphi+\psi)$ and $\{x,y,z\}\in [C]^3$, then there is a path $P=\{x_0,\dots,x_n\}$    on $D_1(\varphi+\psi)$ such that for some $1\leq j <n$, $\{x,y,z\}=\{x_0,x_j,x_n\}$.
\end{lem}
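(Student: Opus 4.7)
The plan is to apply Theorem \ref{teo:Pouzetetal}, which under the present hypotheses forces the component $C$ to be either a path or an even cycle on $D_1(\varphi+\psi)$. The argument then splits into these two cases.

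\textbf{Case 1: $C$ is a path.} I would enumerate $C=\{c_0,c_1,\ldots,c_m\}$ along the path so that $\{c_i,c_{i+1}\}\in D_1(\varphi+\psi)$ for $0\le i<m$, and these are the only pairs from $[C]^2$ that lie in $D_1(\varphi+\psi)$. Given $\{x,y,z\}\in [C]^3$, write them as $c_{i_1},c_{i_2},c_{i_3}$ with $i_1<i_2<i_3$, and take $P=\{c_{i_1},c_{i_1+1},\ldots,c_{i_3}\}$; this is visibly a sub-path on $D_1(\varphi+\psi)$ with $c_{i_2}$ as an internal vertex, giving $\{x,y,z\}=\{x_0,x_j,x_n\}$ with $n=i_3-i_1$ and $j=i_2-i_1\in\{1,\ldots,n-1\}$.

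\textbf{Case 2: $C$ is an even cycle of length $m\ge 4$.} I would enumerate $C=\{c_0,\ldots,c_{m-1}\}$ cyclically, with edges $\{c_i,c_{(i+1)\bmod m}\}$. The three vertices $\{x,y,z\}$ partition the cycle into three arcs of positive integer lengths summing to $m$. Because $m\ge 4$, the pigeonhole principle forces at least one arc to have length $\ge 2$. I would excise such an arc and take as $P$ the union of the two retained arcs: this is a contiguous sub-arc of the cycle containing all of $x,y,z$, with their common endpoint (the ``middle'' vertex among the three) lying in the interior. Labeling $P=\{x_0,x_1,\ldots,x_n\}$ in the cyclic order, one has $\{x,y,z\}=\{x_0,x_j,x_n\}$ for some $1\le j<n$.

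The main obstacle is verifying that $P$ really is a path in the formal sense of Section \ref{sec:prelim}, i.e.\ that $[P]^2\cap D_1(\varphi+\psi)$ consists \emph{exactly} of the consecutive edges. In Case 1 this is immediate from the assumption that $C$ is a path. In Case 2 the consecutive edges of $P$ are cycle edges, and the only remaining cycle edge with both endpoints in $P$ would be the closing chord between the endpoints of the arc $P$; but this chord occurs in $D_1(\varphi+\psi)\cap[P]^2$ only when $P=C$, which is excluded by our pigeonhole choice of an excised arc of length $\ge 2$. This is precisely the step where the fact that the cycle has even length (hence $m\ge 4$) is actually used.
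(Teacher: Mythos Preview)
Your argument is correct and follows essentially the same route as the paper: invoke Theorem~\ref{teo:Pouzetetal} and split into the path/even-cycle cases, taking in each case the sub-arc of $C$ spanned by two of the three given vertices with the third in its interior. Your pigeonhole step in Case~2 (three arcs of positive length summing to $m\ge 4$ force one of length $\ge 2$) is exactly equivalent to the paper's observation that two of $x,y,z$ must be non-adjacent on the cycle, and you are somewhat more careful than the paper in explicitly checking that the chosen $P$ satisfies the induced-subgraph definition of a path from Section~\ref{sec:prelim}.
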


\begin{proof}
By theorem \ref{teo:Pouzetetal}, $C$ is either a path or a cycle of even length on $D_1(\varphi+\psi)$. 

\bigskip

\noindent \textbf{Case 1:} If $C$ is path on $D_1(\varphi+\psi)$ then there is a unique path between any two points. Then we find the largest path $P=\{x_0,\dots,x_n\}$ on $D_1(\varphi+\psi)$ between two elements of $\{x,y,z\}$. We permute $\{x,y,z\}$ such that $x_0=x$, $x_j=y$ for some $1\leq j < n$ and $x_n=z$.

\bigskip 

\noindent\textbf{Case 2:} If $C$ is a cycle of even length, then there are exactly two paths on $D_1(\varphi+\psi)$ between two non-adjacent elements of $C$. Permute $\{x,y,z\}$ so that $x$ and $z$ are non-adjacent, then $y$ is in one of the paths from $x$ to $z$. There is a path $P=\{x_0,\dots,x_n\}$ on $D_1(\varphi+\psi)$ such that $x_0=x$, $x_j=y$ for some $1\leq j < n$ and $x_n=z$.
\end{proof}


The following theorem gives some structural information regarding homogenous sets of the restriction $\varphi\restriction [C]^2$ where $C$ is a component of $D_1(\varphi+\psi)$. As seen in figures \ref{fig:boolSum} and \ref{fig:boolSm2}, there are only two $\subseteq-$chains of homogenous sets for $\varphi \restriction [C]^2$. The reader might have noticed there is a similarity with the coloring shown in figure \ref{fig:PartitionInducedColoring} in example \ref{ex:PcritCcrit} (i).

\begin{teo}
\label{teo:HomPartition}
Let $\varphi$ and $\psi$ be colorings on  $X$ with $\varphi H\psi$ and $\Hom(\varphi+\psi)=\Hom_0(\varphi+\psi)$. If $C$ is a component of $D_1(\varphi+\psi)$ with $|C|\geq 6$, then there are exactly two maximal homogeneous sets of the same color for $\varphi\upharpoonright[C]^2$ forming a partition of $C$.
\end{teo}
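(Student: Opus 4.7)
The plan is to leverage the rigid combinatorial description of $\varphi$ on a path of $D_1(\varphi+\psi)$ supplied by lemmas \ref{lem:path1}, \ref{lem:path2}, and \ref{lem:evenpaths}. By theorem \ref{teo:Pouzetetal}, $C$ is either a path or an even cycle of $D_1(\varphi+\psi)$; enumerate its vertices as $x_0, x_1, x_2, \dots$ along the path or cyclically around the cycle, let $a = \varphi\{x_0, x_2\}$, and set $E_C = \{x_i\in C : i \text{ is even}\}$ and $O_C = \{x_i\in C : i \text{ is odd}\}$. Since $|C|\geq 6$, each of $E_C, O_C$ has at least $3$ elements. The target is to show that $E_C$ and $O_C$ are precisely the two maximal $\varphi$-homogeneous subsets of $C$, and that both have color $a$.

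The first step is to establish the following color description of $\varphi\upharpoonright[C]^2$: for every pair $\{x_i, x_j\}\subseteq C$ whose distance in the path/cycle is at least $2$, $\varphi\{x_i, x_j\}=a$ when $i-j$ is even and $\varphi\{x_i, x_j\}=1-a$ when $i-j$ is odd; and the $\varphi$-colors of adjacent pairs alternate along the path/cycle (by lemma \ref{lem:path1} applied to length-$2$ subpaths). In the path case, lemma \ref{lem:evenpaths} supplies this at distances $2$ and $3$, and lemma \ref{lem:path2} applied to the subpath $\{x_i, x_{i+1}, \dots, x_j\}$ extends it to all larger distances. In the cycle case of length $2m$, every pair at cycle distance $d\geq 2$ sits at the endpoints of a proper sub-arc of the cycle of length $d$ (choosing the shorter side), which is itself a path in $D_1(\varphi+\psi)$ since it contains no wrap-around edge; the same lemmas then yield the description, and the evenness of $2m$ (corollary \ref{cor:NoOddCycle}) guarantees that the parity of $d$ and of $i-j$ coincide, making the description consistent.

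With the color description in hand, both $E_C$ and $O_C$ are $\varphi$-homogeneous of color $a$ because all their internal pairs have same-parity indices and distance $\geq 2$. Maximality of $E_C$ and $O_C$ follows because $|E_C|, |O_C|\geq 3$ forces any vertex of opposite parity to have some element of the set at distance $\geq 3$, which has color $1-a$. For uniqueness, suppose $H\subseteq C$ is $\varphi$-homogeneous with $|H|\geq 3$. First, $H$ cannot contain two consecutive vertices $x_i, x_{i+1}$: any third vertex $x_k\in H$ is either adjacent to one of them, producing three consecutive vertices whose two adjacent edges have different $\varphi$-colors, or at distance $\geq 2$ from both, in which case $\varphi\{x_i,x_k\}$ and $\varphi\{x_{i+1},x_k\}$ involve indices of opposite parities and hence have different colors; in either case $H$ fails to be homogeneous. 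Hence every pair in $H$ has distance $\geq 2$, so if $H$ contained two vertices of opposite parity, a third vertex of either parity would force pairs of both colors. Therefore $H$ has all vertices of one parity, so $H\subseteq E_C$ or $H\subseteq O_C$.

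The main obstacle is the cycle case: one must be careful that the lemmas applied along different sub-arcs yield compatible color assignments over all pairs in $[C]^2$. This compatibility hinges precisely on the evenness of the cycle's length, which makes the parity of an index a well-defined notion on $C$.
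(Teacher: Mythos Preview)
Your proof is correct and follows essentially the same route as the paper's: both invoke theorem \ref{teo:Pouzetetal} to enumerate $C$ as a path or even cycle, split $C$ into even- and odd-indexed vertices, and use lemmas \ref{lem:path1}, \ref{lem:path2}, and \ref{lem:evenpaths} to verify that these two parts are the maximal homogeneous sets, of the same color. The only cosmetic difference is that the paper packages the key computation as the characterization ``$H\in\Hom(\varphi)$ iff every two points of $H$ are joined by a path of even length in $D_1(\varphi+\psi)$,'' whereas you compute the $\varphi$-color of each pair directly from the index parities and are a bit more explicit about uniqueness.
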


\begin{proof}
Let $C$ be a component of $X$ on $D_1(\varphi+\psi)$ with $|C|\geq 6$. The proof is based on the following fact. If $H\in [C]^{\geq 3}$, then 
\begin{equation}\label{eq:Hompath2}
\begin{alignedat}{2}
    H\in \Hom(\varphi) &\iff \forall \, x,z \in H \text{ there is a path of even length from $x$ to $z$.}\\
    &\iff \forall \, x,z \in H \text{ every path from $x$ to $z$ is of even length.}
\end{alignedat}     
\end{equation}

That the two statements on the right of \eqref{eq:Hompath2} are equivalent follows from theorem \ref{teo:Pouzetetal}. Indeed, given that $C$ is either a path or a cycle of even length on $D_1(\varphi+\psi)$, then either there is a unique path from $x$ to $z$ for $x,z\in C$ or there are exactly two paths from $x$ to $z$, and both have length of the same parity.

We need another auxiliary fact. Let us consider any triple $\{x,y,z\}\in [C]^3$. 
By lemma \ref{prop:path4} we know there is a path $P=\{x_0,\dots,x_n\}$ on $D_1(\varphi+\psi)$ such that $x_0=x$, $x_j=y$ for some $0< j < n$ and $x_n=z$. 
Define $P_1=\{x_0,\dots, x_j\}$ and $P_2=\{x_j,\dots, x_n\}$. The length of $P_1$ is $j$ and  the length of $P_2$ is $n-j$, as the length of $P$ is $n=j+n-j$. By lemma \ref{lem:evenpaths} we have 
\begin{equation}\label{eq:2by2}
   \varphi\{x_0,x_2\}=\varphi\{x_i,x_{i+2}\} \text{ for } 0\leq i \leq n-2. 
\end{equation}

Now we start with the  proof of \eqref{eq:Hompath2}. Let $H\in \Hom(\varphi\upharpoonright [C]^2)$ and  $\{x,y,z\}\in [H]^3$.  We are going to show that there is a path from $x$ to $y$ of even length, a path from $y$ to $z$ of even length, and a path from $x$ to $z$ of even length.

By lemma \ref{lem:path2}, we have that $\varphi\{x_0,x_2\} = \varphi\{x_0,x_n\}$ if and only if $n$ is even and that $\varphi\{x_0,x_2\}=\varphi\{x_0,x_j\}$ if and only if $j$ is even. Since identities $x_0=x$, $x_j=y$, $x_n=z$ hold and $\{x,y,z\}\in \Hom(\varphi)$,  $j$ and $n$ have same parity.

Let us see that $n$ must be even. Suppose not. Then $n$ and $j$ are odd and  $n-j$ is even. 
Equation \eqref{eq:2by2} and lemma \ref{lem:path2} applied to $P$, $P_1$ (odd length) and $P_2$ (even length) give
   \[ 
   \varphi\{x_0,x_j\} =\varphi\{x_0,x_n\} = 1-\varphi\{x_0,x_2\}=1-\varphi\{x_j,x_{j+2}\} = 1-\varphi\{x_j,x_n\},
   \]
which contradicts that $\{x,y,z\}$ is a homogenous set for $\varphi$. So $n$ and $j$  must be even and so $n-j$ is even too. 
   
Conversely, suppose that the right-hand side of \eqref{eq:Hompath2} holds for $H=\{x,y,z\}$. Again by lemma \ref{prop:path4}, we assume there is a path $P=\{x_0,\dots,x_n\}$ on $D_1(\varphi+\psi)$ such that $x_0=x$, $x_j=y$ for some $1\leq j<n$ and $x_n=z$. Then $n$ and $j$ are even.

Let $P_1=\{x_0,\dots,x_j\}$ and $P_2=\{x_j,\dots,x_n\}$ be the paths on $D_1(\varphi+\psi)$ from $x$ to $y$ and $y$ to $z$, respectively. $P_1$ and $P_2$ must have even length by hypothesis. Then, applying lemma \ref{lem:path2} we get
   \[ \varphi\{x_0,x_2\} =\varphi\{x_0,x_j\}=\varphi\{x_0,x_j\}=\varphi\{x_j,x_n\},\]
   which means that $\{x,y,z\}\in \Hom(\varphi)$.  So \eqref{eq:Hompath2} is proved.

Now we prove the theorem. Suppose $C=\{x_0,\dots,x_n\}$ is a component of $X$ on $D_1(\varphi+\psi)$ (either a path or a cycle of even length, by theorem \ref{teo:Pouzetetal}). Set 
\[ 
H_1=\{x_k \in C\,:\, k \text{ is even}\}\;\;\mbox{and}\;\; H_2=\{x_k \in C\,:\, k \text{ is odd}\}.
\]
By \eqref{eq:Hompath2}, both $H_1$ and $H_2$ are $\varphi$-homogeneous. If $x\in H_1$ and $y\in H_2$, then there is a path of odd length on $D_1(\varphi+\psi)$ from $x$ to $y$. Thus, by using again  \eqref{eq:Hompath2},  $H_1\cup \{y\}$ and $H_2\cup \{x\}$  are not $\varphi$-homogeneous. Since $C=H_1\cup H_2$ and $H_1$ and $H_2$ are disjoint, we conclude that $H_1$ and $H_2$ are maximally homogeneous sets for $\varphi$.

As  $\varphi\{x_0,x_2\}=\varphi\{x_1, x_3\}$, $H_1$ and $H_2$ are homogeneous of the same color. 
\end{proof}

\section{Minimal reconstruction of a coloring}
\label{sec:min-rec}

Let $\varphi$ and $\psi$ be colorings on $X$. To recall the  function $r$ defined in \cite{clari-uzca2023} we first set 
\[
D(\varphi, \psi)=\{ \{x,y\} \in X^{[2]}:  \psi\{x,y\}\neq \varphi\{x,y\}\}.
\]
It is important to realize that 
$$
D(\varphi, \psi)=D_1(\varphi+\psi).
$$
Define $r:(2^{X^{[2]}}\setminus\mathcal{R}) \to\mathbb{N}\cup\{\infty\}$  as 
\[
r(\varphi)= \min\{|D(\varphi, \psi)|: \varphi H\psi,  \,\psi\neq\varphi,\, \psi\neq1-\varphi\}. 
\]
We say that $\psi$ is a {\em  minimal (non trivial) reconstruction} of $\varphi$ if  $\varphi H\psi$ and $r(\varphi)=|D(\varphi,\psi)|$. 

It is immediate to see that if $\varphi$ has a critical pair, then $r(\varphi)=1$ (see remark \ref{rem-critical-pair}). Moreover, the converse also holds, if  $r(\varphi)=1$, then there is a critical pair for $\varphi$ (see Theorem 4.5 in \cite{clari-uzca2023}, we will proved it  below for the sake of completeness). 
It is also known that $r(\varphi)\neq 2$ for all $\varphi\not\in \mathcal{R}$ (see Lemma 4.4 in \cite{clari-uzca2023}).   We are going to show that  there are only two possibilities: Either $r(\varphi)=1$ or $r(\varphi)=4$. Our strategy consists of showing that  there is a critical pair for any coloring $\varphi$ such the components of the Boolean sum $\varphi+\psi$ does not contain a cycle of length 4  whenever $\varphi H \psi$.  This means that $r(\varphi)=1$ for these colorings.

In example \ref{ex:PcritCcrit} two colorings and some of their non-trivial reconstructions were defined and illustrated. The coloring $\varphi$ in item (i) of \ref{ex:PcritCcrit} exemplifies the case $r(\varphi)=1$. The coloring $\varphi$ in (ii) is shown to have a reconstruction $\psi$ such that $|D_1(\varphi+\psi)|=4$ (it has a critical cycle) and it is in fact a minimal reconstruction by the results we show in this section.

The next results shows the relevance of the components of $D_1(\varphi+\psi)$, with $\psi$ a non-trivial reconstruction of $\varphi$, to determine  the value of $r(\varphi)$

\begin{lem}
\label{recon-comp}
Let $\varphi$ and $\psi$ be colorings on $X$. Suppose  $\varphi H\psi$,  $\psi\neq\varphi$,  $\psi\neq1-\varphi$
and $r(\varphi)= |D(\varphi, \psi)|$.
Let $C$ be a component of $D_i(\varphi+\psi)$ for some $i\in\{0,1\}$. Suppose   $\psi\restriction [C]^2$ is a non-trivial reconstruction of $\varphi\restriction [C]^2$. Let $\psi'$ be the coloring on $X$ defined by 
\[
\psi'\{x,y\}= \begin{cases}
\psi\{x,y\} &\mbox{if $\{x,y\}\subseteq C,$}\\
\varphi\{x,y\}&  \mbox{if $\{x,y\}\not\subseteq C$}.
\end{cases}
\]
Then $\varphi H\psi'$ and 
$$
r(\varphi)\leq |D(\varphi\restriction [C]^2, \psi'\restriction [C]^2)|.
$$
In particular,  if $\psi$ is a minimal reconstruction of $\varphi$, then $D_1(\varphi+\psi)$ is connected.
\end{lem}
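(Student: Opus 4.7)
The plan has two halves: first show $\varphi H \psi'$, so that $\psi'$ is itself a non-trivial reconstruction of $\varphi$, and then use minimality of $r$ to compare cardinalities. The \emph{in particular} statement drops out by restricting $\psi$ to a single component, applying the main part of the lemma, and obtaining a strict descent.

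For the first half I would check $T\in\Hom(\varphi)\iff T\in\Hom(\psi')$ for every $T\in[X]^3$ by cases on $|T\cap C|$. The case $T\subseteq C$ uses directly the hypothesis that $\psi\restriction [C]^2$ is a reconstruction of $\varphi\restriction [C]^2$. The case $|T\cap C|\le 1$ is trivial: no pair of $T$ lies in $[C]^2$, so $\psi'\restriction [T]^2=\varphi\restriction [T]^2$. The only delicate case is $|T\cap C|=2$. Write $T=\{x,y,z\}$ with $x,y\in C$ and $z\notin C$, and take $i=1$ (the case $i=0$ is entirely analogous after swapping the roles of $\psi$ and $1-\psi$). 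Because $C$ is a component of $D_1(\varphi+\psi)$ and $z\notin C$, the cross-pairs $\{x,z\},\{y,z\}$ cannot belong to $D_1(\varphi+\psi)$, so $\psi$ and $\varphi$ agree on them; hence $\psi'\restriction [T]^2=\psi\restriction [T]^2$ and the equivalence follows from $\varphi H\psi$.

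For the second half, note that by construction $D(\varphi,\psi')=D(\varphi,\psi)\cap[C]^2=D(\varphi\restriction [C]^2, \psi'\restriction [C]^2)$. Checking that $\psi'$ is a non-trivial reconstruction is routine: $\psi'\ne\varphi$ because $\psi\restriction [C]^2\ne\varphi\restriction [C]^2$, and $\psi'\ne 1-\varphi$ because either $C=X$ (so $\psi'=\psi\ne 1-\varphi$) or else some pair outside $[C]^2$ has $\psi'=\varphi$, which cannot equal $1-\varphi$. The defining property of $r$ then yields $r(\varphi)\le|D(\varphi,\psi')|$.

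For the corollary, assume $\psi$ is minimal and, for contradiction, that $D_1(\varphi+\psi)$ is disconnected. Pick a component $C$. When $|C|\ge 3$, Theorem \ref{teo:Pouzetetal} says $C$ induces a path or an even cycle in $D_1(\varphi+\psi)$, so some pair in $[C]^2$ lies outside $D_1(\varphi+\psi)$; combined with the fact that the edges of $C$ are in $D_1(\varphi+\psi)$, this makes $\psi\restriction [C]^2$ a non-trivial reconstruction of $\varphi\restriction [C]^2$, and the main claim yields $r(\varphi)\le|D(\varphi,\psi)\cap[C]^2|<|D(\varphi,\psi)|=r(\varphi)$, a contradiction. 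If instead every component has size $2$, then $D_1(\varphi+\psi)$ is a matching with at least two edges. For any component $\{a,b\}$ and any $z\notin\{a,b\}$, the pairs $\{a,z\},\{b,z\}$ lie outside $D_1(\varphi+\psi)$, and testing the triple $\{a,b,z\}$ against $\varphi H\psi$ forces $\varphi\{a,z\}\ne\varphi\{b,z\}$; hence $\{a,b\}$ is a critical pair and $r(\varphi)=1<|D(\varphi,\psi)|$, again contradicting minimality. I expect this last case to be the main obstacle: it is not covered by the main claim of the lemma and must be settled by producing a critical pair by hand.
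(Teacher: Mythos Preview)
Your argument for the main claim is correct and in one respect cleaner than the paper's. In the case $|T\cap C|=2$ the paper invokes Lemma~\ref{lem:Pouzet} to conclude $\varphi\{x,z\}\ne\varphi\{y,z\}$, which strictly speaking only applies when $\{x,y\}\in D_1(\varphi+\psi)$ and silently skips the subcase $\{x,y\}\notin D_1(\varphi+\psi)$. Your observation that $\psi'\restriction[T]^2=\psi\restriction[T]^2$ (because the cross-pairs $\{x,z\},\{y,z\}$ lie outside $D_1(\varphi+\psi)$) covers both subcases at once via $\varphi H\psi$.

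For the ``in particular'' clause, however, you work much harder than necessary and introduce a genuine gap. Your invocation of Theorem~\ref{teo:Pouzetetal} is not justified: that theorem assumes $\Hom(\varphi+\psi)=\Hom_0(\varphi+\psi)$, which is not a hypothesis of the present lemma, and without it a component of size $3$ could well be a triangle in $D_1(\varphi+\psi)$ (cf.\ Figure~\ref{fig:homsum}), in which case $\psi\restriction[C]^2=1-\varphi\restriction[C]^2$ and your verification of non-triviality fails. The separate treatment of the all-size-$2$ case via critical pairs is correct but also unnecessary.

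The fix is to notice something your own argument already shows: your proof of $\varphi H\psi'$ nowhere uses that $\psi\restriction[C]^2$ is a \emph{non-trivial} reconstruction of $\varphi\restriction[C]^2$; it only uses $\varphi H\psi$ and that $C$ is a component. So for \emph{any} component $C$ you get $\varphi H\psi'$. If there is more than one component, then $\psi'\ne\varphi$ (since $C$ contains an edge of $D_1(\varphi+\psi)$) and $\psi'\ne 1-\varphi$ (since any pair meeting another component lies outside $[C]^2$, where $\psi'=\varphi$). Hence $\psi'$ is a non-trivial reconstruction with $|D(\varphi,\psi')|=|D_1(\varphi+\psi)\cap[C]^2|<|D_1(\varphi+\psi)|=r(\varphi)$, contradicting minimality. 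This is precisely the paper's (terse) argument, and it needs no case split on $|C|$ and no appeal to Theorem~\ref{teo:Pouzetetal}.
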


\begin{proof} To see that $\varphi H\psi'$, let $T=\{x,y,z\}\in [X]^3$ and consider two cases. 

\bigskip 

\noindent\textbf{Case 1:} Suppose $T\subseteq C$.  Then  $T\in \Hom(\varphi)$ iff $T\in \Hom(\psi')$,  as $\psi$ and $\psi'$ are equal on $[C]^2$. 

\bigskip 

\noindent\textbf{Case 2:} Suppose $T\not\subseteq C$. If $|T\cap C|\leq 1$, then $\varphi$ and $\psi'$ are equal on $[T]^2$. Finally, suppose w.l.o.g. that $T\cap C= \{x,y\}$. As $C$ is a component of $\varphi+\psi$, by lemma \ref{lem:Pouzet}, we conclude that 
$$
\psi'\{x,z\}=\varphi\{x,z\}\neq \varphi\{y,z\}=\psi'\{y,z\}.
$$
Thus $T\not\in \Hom(\varphi)\cup \Hom(\psi')$. 

\bigskip 

We have shown that $\varphi H\psi'$. 
Since $\psi\restriction [C]^2$ is a nontrivial reconstruction of $\varphi\restriction [C]^2$, we have that $\psi'\neq \varphi$ and $\psi'\neq 1-\varphi$.  Clearly $r(\varphi)\leq |D(\varphi\restriction [C]^2, \psi'\restriction [C]^2)|$.

Finally,  suppose that $\psi$ is a minimal reconstruction of $\varphi$, and that $D_1(\varphi+\psi)$ has more than one component. Then take an arbitrary component $C$. By the preceding argument, $r(\varphi)\leq D_1(\varphi \restriction [C]^2+\varphi\restriction [C]^2) < D_1(\varphi+\psi)$, contradicting the minimality of $\psi$.
\end{proof}

\begin{lem}
\label{lem:notinC}
Let $\varphi$ and $\psi$ be colorings on a set $X$ with $\varphi H\psi$ and $C$ be  a component of $D_1(\varphi+\psi)$. Then 
\begin{itemize}
\item[(i)] $\varphi\{x,y \}\neq\varphi\{x,z\}$ for every $x \notin C$ and every $\{y,z\}\in [C]^2 \cap D_1(\varphi+\psi)$.

\item[(ii)] If $\{y,z\} \in D_1(\varphi+\psi)$ is a critical pair for $\varphi\upharpoonright [C]^2$, then $\{y,z\}$ is a critical pair for $\varphi$.

\item[(iii)] If $\{y,z\}\in [X]^2$ is a component of $D_1(\varphi+\psi)$, then $\{y,z\}$ is a critical pair for $\varphi$. 

\item[(iv)]If $r(\varphi)=1$, then $\varphi$ has a critical pair.
\end{itemize}
\end{lem}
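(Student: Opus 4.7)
The plan is to prove the four items in order, deriving each from the previous; the substantive step is (i), which is essentially a direct application of Lemma~\ref{lem:Pouzet}. For (i), I would fix $x \notin C$ and a pair $\{y,z\} \in [C]^2 \cap D_1(\varphi+\psi)$. Since $C$ is a connected component of the graph $(X, D_1(\varphi+\psi))$, if either $\{x,y\}$ or $\{x,z\}$ belonged to $D_1(\varphi+\psi)$ then $x$ would be joined to $C$ by a single edge and hence lie in $C$, contradicting $x \notin C$. Therefore $(\varphi+\psi)\{x,y\} = (\varphi+\psi)\{x,z\} = 0$ while $(\varphi+\psi)\{y,z\} = 1$, and this is exactly the hypothesis of Lemma~\ref{lem:Pouzet} applied to the triple $\{x,y,z\}$; its conclusion delivers $\varphi\{x,y\} \neq \varphi\{x,z\}$.

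Items (ii)--(iv) are then short reductions to (i). For (ii), to check that $\{y,z\} \in D_1(\varphi+\psi)$ is critical for $\varphi$ on all of $X$, I would split each third vertex $w \in X \setminus \{y,z\}$ into two cases: if $w \in C$, the relation $\varphi\{y,w\} \neq \varphi\{z,w\}$ is the hypothesis that $\{y,z\}$ is critical for $\varphi \restriction [C]^2$; if $w \notin C$, it is (i) applied with $x = w$. Item (iii) is the degenerate case $C = \{y,z\}$: being a component forces $\{y,z\} \in D_1(\varphi+\psi)$, and the critical pair condition inside $C$ is vacuously true since $C \setminus \{y,z\} = \emptyset$, so (ii) applies. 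For (iv), if $r(\varphi) = 1$ I fix a witnessing nontrivial reconstruction $\psi$ of $\varphi$; then $D_1(\varphi+\psi)$ consists of a single edge $\{y,z\}$, which (together with its two endpoints) is the only nontrivial component of $(X, D_1(\varphi+\psi))$ in the sense of Section~\ref{sec:prelim}, so (iii) applies and yields a critical pair.

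There is no real obstacle here once Lemma~\ref{lem:Pouzet} is in hand; the argument is essentially bookkeeping. The only subtle point is to make explicit use of the connectedness of $C$ in (i) to guarantee both $\{x,y\}, \{x,z\} \in D_0(\varphi+\psi)$, which is what puts us in the configuration required by Lemma~\ref{lem:Pouzet}.
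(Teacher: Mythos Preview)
Your proposal is correct and follows essentially the same approach as the paper: part (i) is a direct application of Lemma~\ref{lem:Pouzet} after observing that $x\notin C$ forces $(\varphi+\psi)\{x,y\}=(\varphi+\psi)\{x,z\}=0$, and parts (ii)--(iv) are successive reductions to (i). In fact your write-up is slightly more explicit than the paper's, which simply states that (ii) and (iii) ``follow immediately from (i)''.
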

\proof
(i) If $x\notin C$, then $(\varphi+\psi)\{x,y\} =(\varphi+\psi)\{x,z\} =0$ and $(\varphi+\psi)\{y,z\}=1$. Then, by lemma \ref{lem:Pouzet}, $\varphi\{x,y\}\neq \varphi\{x,z\}$.

(ii) and (iii)  follow immediately from (i).

(iv) Suppose  $r(\varphi)=1$ and let $\psi$ be such that $|D_1(\varphi,\psi)|=|D(\varphi, \psi)|=1$. Let $y,z\in X$ be such that $\{\{y,z\}\}=D_1(\varphi+\psi)$. Thus $\{y,z\}$ is a component of $D_1(\varphi+\psi)$ and the result follows from (iii).
\endproof

The following lemma says that, for any given $\{x,y\}\in [C]^2\cap D_1(\varphi+\psi)$,  in order to determine whether  $\{x,y\}$ is a critical pair or not, it is enough to check the edges that are $D_1$- adjacent to $\{x,y\}$ .

\begin{lem}\label{lem:lemcrit}
    Let $\varphi$ and $\psi$ be colorings on $X$ with $\Hom(\varphi+\psi)=\Hom_0(\varphi+\psi)$. Suppose that $C$ is a  component of $X$ on $D_1(\varphi+\psi)$ such that $C$ is not a cycle of length 4. Then $\varphi$ has a critical pair.
\end{lem}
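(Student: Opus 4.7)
The plan is to reduce the problem to finding a critical pair for $\varphi\upharpoonright[C]^2$ and then invoke Lemma~\ref{lem:notinC}(ii) to promote it to a critical pair of $\varphi$. By Theorem~\ref{teo:Pouzetetal}, $C$ is either a path or a cycle of even length. Excluding the $4$-cycle leaves three sub-cases: (a)~$|C|=2$, (b)~$C$ is a path with $|C|\geq 3$, or (c)~$C$ is a cycle of even length $\geq 6$. Case~(a) follows at once from Lemma~\ref{lem:notinC}(iii), because the component is then a single edge, which must be a critical pair for $\varphi$.

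In cases (b) and (c), enumerate $C=\{x_0,x_1,x_2,\dots\}$ along the path or cycle, and set $c=\varphi\{x_0,x_2\}$. Using Lemmas~\ref{lem:path1}, \ref{lem:path2}, and \ref{lem:evenpaths} (and Theorem~\ref{teo:HomPartition} when $|C|\geq 6$), one verifies the structural description
\[
\varphi\{x_i,x_j\} = \begin{cases} c & \text{if $i,j$ have the same parity and $i\neq j$,}\\ 1-c & \text{if $i,j$ have opposite parities and $\{x_i,x_j\}\notin D_1(\varphi+\psi)$.}\end{cases}
\]
Moreover, by Lemma~\ref{lem:path1} applied to the $2$-subpaths of $C$, the path-edge colors $\varphi\{x_k,x_{k+1}\}$ strictly alternate along $C$. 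In particular, exactly one of the two consecutive path edges $\{x_0,x_1\}$ and $\{x_1,x_2\}$ has color $c$; denote it by $\{y,z\}$.

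The claim is that $\{y,z\}$ is a critical pair for $\varphi\upharpoonright[C]^2$. Fix $w\in C\setminus\{y,z\}$. If $w$ is $D_1(\varphi+\psi)$-adjacent to neither $y$ nor $z$, then the triple $\{y,z,w\}$ has Boolean-sum pattern $(1,0,0)$, so Lemma~\ref{lem:Pouzet} directly yields $\varphi\{y,w\}\neq\varphi\{z,w\}$. Otherwise $w$ is the path-neighbor in $C$ of either $y$ or $z$ distinct from $\{y,z\}$ itself; by the structural description above, one of $\varphi\{y,w\},\varphi\{z,w\}$ is a same-parity non-path edge of value $c$, while the other is the path edge adjacent to $\{y,z\}$, whose value is $1-c$ by the alternation together with $\varphi\{y,z\}=c$. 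These values differ, so $\{y,z\}$ is critical on $[C]^2$. Lemma~\ref{lem:notinC}(ii) then lifts $\{y,z\}$ to a critical pair for $\varphi$ on $X$.

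\textbf{Main obstacle.} The delicate step is the structural description of $\varphi$ on $[C]^2$. In the small path cases $|C|\in\{3,4,5\}$ Theorem~\ref{teo:HomPartition} is not directly applicable, so the description must be pieced together from Lemmas~\ref{lem:path2} and \ref{lem:evenpaths} by hand; and in the cycle case one must check consistency between the two paths in $C$ joining any pair of vertices, which works precisely because $|C|$ is even. Once the structural description is in place, the verification of the critical condition is a short case analysis depending only on whether $w$ is or is not a path-neighbor of $\{y,z\}$.
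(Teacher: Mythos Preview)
Your argument is correct and follows the same overall strategy as the paper: dispose of $|C|=2$ via Lemma~\ref{lem:notinC}(iii), and for larger $C$ locate a critical pair for $\varphi\restriction[C]^2$ lying in $D_1(\varphi+\psi)$, then lift it via Lemma~\ref{lem:notinC}(ii). The candidate edge is the same in both proofs---the path edge whose $\varphi$-color agrees with $c=\varphi\{x_0,x_2\}$.

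The organization differs. You first assemble a complete structural description of $\varphi\restriction[C]^2$ (invoking Lemmas~\ref{lem:path1}, \ref{lem:path2}, \ref{lem:evenpaths} and, for $|C|\ge6$, Theorem~\ref{teo:HomPartition}), and then read off the critical condition from that description. The paper proceeds more locally: it fixes an arbitrary $2$-path $\{x,y,z\}$ in $D_1(\varphi+\psi)$, uses the value of $\varphi\{x,z\}$ to decide which of its two edges should be critical, and then checks each $w\in C\setminus\{y,z\}$ by a two-case split---Lemma~\ref{lem:Pouzet} directly when $(\varphi+\psi)\{w,z\}=0$, and Lemmas~\ref{lem:path1} and~\ref{lem:evenpaths} applied to the $3$-path $\{x,y,z,w\}$ when $(\varphi+\psi)\{w,z\}=1$. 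The exclusion of the $4$-cycle enters only to guarantee that $\{x,y,z,w\}$ really is a path in this second case. This route is a bit more economical: it never needs the global partition of Theorem~\ref{teo:HomPartition}, and it handles all $|C|\ge3$ uniformly without a separate treatment of small paths or of cycles.
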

\begin{proof}
If $|C|=2$ then $C$ is just one edge, and by lemma \ref{lem:notinC}, (iii), there is a critical pair for $\varphi$. So suppose $|C|\geq 3$. Let $P=\{x,y,z\}\subseteq C$ be a path of length 2 on $D_1(\varphi+\psi)$, and suppose $\varphi\{x,y\}\neq \varphi\{x,z\}$.  We will show that $\{y,z\}$ is a critical pair for $\varphi$. Note that in the opposite case, $\varphi\{x,y\}=\varphi\{x,z\}$, we would have $\varphi\{y,z\}\neq \varphi\{y,z\}$ (by  lemma \ref{lem:Pouzet}), and the argument would be completely analogous to show that  $\{x,y\}$ is a critical pair for $\varphi$.

Suppose  $|C|=3$, i.e. $C=P$. Notice that  $\{y,z\}\in D_1(\varphi+\psi)$ and it is a critical pair for $\varphi\restriction [C]^2$. Thus,   by lemma \ref{lem:notinC} (ii), $\{y,z\}$ is a critical pair for $\varphi$. 

Suppose $|C|\geq 4$, we will again prove that $\{y,z\}$ is a critical pair for $\varphi \restriction [C]^2$. Let $w\in C\setminus P$. Note that $(\varphi+\psi)\{w,y\}=0$, since $\deg_{D_1(\varphi+\psi)}(y)<3$. We consider two cases. 

\begin{itemize}
\item[(i)] If $(\varphi+\psi)\{w,z\}=0$,   we use lemma \ref{lem:Pouzet} to get $\varphi\{y,w\}\neq \varphi\{w,z\}$.

\item[(ii)] If $(\varphi+\psi)\{w,z\}=1$,  then $P'=P\cup\{w\}$ is a path of length 3 on $D_1(\varphi+\psi)$ (since $C$ must be either a cycle of even length greater than 4 or a path by theorem \ref{teo:Pouzetetal}). By equation \eqref{eq:evenpaths} of lemma \ref{lem:evenpaths}, applied to $P'$, we have $\varphi\{x,z\}=\varphi\{y,w\}$. By lemma \ref{lem:path1} (also applied to $P'$),  $\varphi\{x,y\} = \varphi\{z,w\}$. Since we assumed that $\varphi\{x,y\}\neq \varphi\{x,z\}$, we conclude that $\varphi\{y,w\}=\varphi\{x,z\}=1-\varphi\{x,y\} =\varphi\{z,w\}$.  

\end{itemize}

We just proved that for every $w\in C\setminus\{y,z\}$, $\varphi\{y,w\}\neq \varphi\{z,w\}$. Applying lemma \ref{lem:notinC}, (ii), we conclude $\{y,z\}$ is a critical pair for $\varphi$.

\end{proof}

\begin{teo}
\label{teo:SumasMinimales}
Let $X$ be a set of size at least $R(7)$ and  $\varphi$ be a coloring on $X$ with $\varphi\in \neg \mathcal{R}$. Then $r(\varphi)=1$ or $r(\varphi)=4$. 
\end{teo}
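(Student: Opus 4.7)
The plan is to take a minimal nontrivial reconstruction $\psi$ of $\varphi$ (which exists since $\varphi\in\neg\mathcal{R}$) and analyse the single connected component of $D_1(\varphi+\psi)$. By Lemma \ref{recon-comp}, $D_1(\varphi+\psi)$ is connected; let $C$ be its unique component. The goal is to show $C$ must be either a single edge (giving $r(\varphi)=1$) or a cycle of length $4$ (giving $r(\varphi)=4$).

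The first technical step is to show $\Hom(\varphi+\psi)=\Hom_0(\varphi+\psi)$. Lemma \ref{prop:Homi} (using $|X|\geq R(7)$) guarantees $\Hom(\varphi+\psi)=\Hom_i(\varphi+\psi)$ for some $i\in\{0,1\}$. Suppose, for contradiction, that $i=1$, and set $\psi':=1-\psi$. One checks that $\psi'$ is again a nontrivial reconstruction of $\varphi$ (the conditions $\psi\neq\varphi$ and $\psi\neq 1-\varphi$ transfer directly), that $\varphi+\psi'=1-(\varphi+\psi)$ pointwise, so $D_1(\varphi+\psi')=D_0(\varphi+\psi)$, and consequently $\Hom(\varphi+\psi')=\Hom_0(\varphi+\psi')$. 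Applying Lemma \ref{teo:deg<=2} to $\psi'$ yields $\deg_{D_1(\varphi+\psi')}(x)\leq 2$ for every $x\in X$, so by the handshake lemma $|D_1(\varphi+\psi')|\leq |X|$. Reading this same inequality as $\deg_{D_0(\varphi+\psi)}(x)\leq 2$ forces $\deg_{D_1(\varphi+\psi)}(x)\geq |X|-3$ for every $x$, whence $|D_1(\varphi+\psi)|\geq |X|(|X|-3)/2$. Since $|X|\geq R(7)\geq 205$, this is strictly larger than $|X|$, contradicting the minimality of $\psi$. Hence $i=0$.

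With $\Hom(\varphi+\psi)=\Hom_0(\varphi+\psi)$ established, Theorem \ref{teo:Pouzetetal} tells us that $C$ is either a path or an even cycle. Lemma \ref{lem:lemcrit} then states that if $C$ is not a cycle of length $4$ the coloring $\varphi$ has a critical pair, which forces $r(\varphi)=1$; by the minimality of $\psi$ this means $|D_1(\varphi+\psi)|=1$, so $C$ is a single edge. If instead $C$ is a cycle of length $4$, then $|D_1(\varphi+\psi)|=4$, so $r(\varphi)=4$. Either way, $r(\varphi)\in\{1,4\}$.

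The main obstacle is ruling out the case $\Hom(\varphi+\psi)=\Hom_1(\varphi+\psi)$, and this is the sole place where the hypothesis $|X|\geq R(7)$ is genuinely needed. The trick is to play $\psi$ against its complementary reconstruction $\psi'=1-\psi$: in the bad case these two reconstructions disagree with $\varphi$ on exactly complementary edge sets, so the degree bound of Lemma \ref{teo:deg<=2} applied to $\psi'$ forces $\psi$ to have vastly more disagreement edges than $\psi'$, contradicting minimality.
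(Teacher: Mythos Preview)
Your approach mirrors the paper's: take a minimal reconstruction $\psi$, use Lemma~\ref{recon-comp} to get a single component $C$ of $D_1(\varphi+\psi)$, and invoke Lemma~\ref{lem:lemcrit}. The paper is in fact terser than you are---it silently relies on the blanket ``w.l.o.g.\ $\Hom(\varphi+\psi)=\Hom_0(\varphi+\psi)$'' announced after Lemma~\ref{prop:Homi}, without checking that this is compatible with $\psi$ being \emph{minimal}. You rightly saw that this step deserves an argument here (since replacing $\psi$ by $1-\psi$ changes $|D(\varphi,\cdot)|$), and your counting trick via $\psi'=1-\psi$ handles it cleanly for finite $X$.

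There is, however, a small gap when $X$ is countably infinite (which the statement allows). The chain
\[
|D_1(\varphi+\psi)|\ \geq\ \frac{|X|(|X|-3)}{2}\ >\ |X|\ \geq\ |D_1(\varphi+\psi')|
\]
is strict only for finite $X$; when $|X|=\aleph_0$ both ends collapse to $\aleph_0$ and no contradiction with minimality follows. The fix is short: in the bad case you have already shown $|D_1(\varphi+\psi)|=\infty$, so $r(\varphi)=\infty$ by minimality of $\psi$. But $\psi'=1-\psi$ is a nontrivial reconstruction with $\Hom(\varphi+\psi')=\Hom_0(\varphi+\psi')$, so Lemma~\ref{lem:lemcrit} applies to any component $C'$ of $D_1(\varphi+\psi')$. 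If $C'$ is not a $4$-cycle, $\varphi$ has a critical pair and $r(\varphi)=1$; if $C'$ is a $4$-cycle, the first part of Lemma~\ref{recon-comp} (note $\psi'\restriction[C']^2$ is nontrivial, since it agrees with $\varphi\restriction[C']^2$ on the two diagonals) gives $r(\varphi)\leq 4$. Either way $r(\varphi)<\infty$, the desired contradiction.
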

\begin{proof}
    Let $\psi$ be a minimal non-trivial reconstruction of $\varphi$. By lemma \ref{recon-comp} there is a unique component $C=\{x_0,x_1,\dots,x_n\}$ on $D_1(\varphi+\psi)$. 

    Suppose $r(\varphi)\neq 1$. Then, by lemma \ref{lem:lemcrit}, we have that $C$ must be a cycle of length $4$.
 Thus $r(\varphi)=|D_1(\varphi+\psi)| = 4$.
\end{proof}

\begin{teo}
\label{teo:SumasMinimales2}
Let $\varphi$ be a coloring on  a set $X$ of size at least $R(7)$. If  $\varphi\in \neg \mathcal{R}$ and $r(\varphi)=4$, then there is $\psi$ such that $\varphi H\psi$ and  $D_1(\varphi+\psi)$ is a critical cycle for $\varphi$.
\end{teo}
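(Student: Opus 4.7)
The plan is to show that any minimal non-trivial reconstruction $\psi$ of $\varphi$ already witnesses the theorem: $D_1(\varphi+\psi)$ will turn out to be a critical cycle for $\varphi$. By Theorem \ref{teo:SumasMinimales} together with Lemma \ref{recon-comp}, $D_1(\varphi+\psi)$ consists of a single non-trivial component that is a $4$-cycle, so write $Z = D_1(\varphi+\psi) = \{\{a,b\},\{b,c\},\{c,d\},\{d,a\}\}$ with $\{a,b,c,d\}\in [X]^4$ and $\{a,c\},\{b,d\}\in D_0(\varphi+\psi)$. Moreover, $\Hom(\varphi+\psi) = \Hom_0(\varphi+\psi)$: the set $X\setminus\{a,b,c,d\}$ has size at least $R(7)-4$ and, since the only edges on which $\varphi$ and $\psi$ differ lie inside $\{a,b,c,d\}$, it is contained in $\Hom_0(\varphi+\psi)$, so the dichotomy of Lemma \ref{prop:Homi} forces this alternative.

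I now verify the clauses of the critical-cycle definition. Condition \eqref{eq:critcycle2} is a direct instance of Lemma \ref{lem:notinC}(i), applied to each of the four edges of $Z$ and each vertex outside $\{a,b,c,d\}$. For the internal equations I apply Lemma \ref{lem:Pouzet} to each of the triangles $\{a,b,c\}, \{b,c,d\}, \{c,d,a\}, \{a,b,d\}$; in every such triangle exactly two edges lie in $D_1(\varphi+\psi)$ and one in $D_0(\varphi+\psi)$, so the lemma yields $\varphi\{a,b\}\neq\varphi\{b,c\}$, $\varphi\{b,c\}\neq\varphi\{c,d\}$, $\varphi\{c,d\}\neq\varphi\{d,a\}$, and $\varphi\{d,a\}\neq\varphi\{a,b\}$. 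The $\varphi$-colors therefore strictly alternate along $Z$, so $\varphi\{a,b\}=\varphi\{c,d\}$ and $\varphi\{b,c\}=\varphi\{d,a\}$, supplying all the ``$=1-\varphi\{\cdot,\cdot\}$'' clauses present in both \eqref{eq:critcycle1} and \eqref{eq:critcyclealt}.

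The main obstacle is the remaining clause in each line of those systems, which together amount to showing that the two diagonals $\{a,c\}$ and $\{b,d\}$ carry opposite $\varphi$-colors. I will argue by contradiction, exploiting that $r(\varphi)=4$ and Remark \ref{rem-critical-pair} together rule out critical pairs for $\varphi$. Suppose $\varphi\{a,c\} = \varphi\{b,d\}$. If this common value equals $\varphi\{a,b\}$, then $\{a,b\}$ is critical for $\varphi$: for $x=c$, $\varphi\{a,c\}=\varphi\{a,b\}\neq\varphi\{b,c\}$; for $x=d$, $\varphi\{a,d\}=\varphi\{b,c\}\neq\varphi\{a,b\}=\varphi\{b,d\}$; and for $x \notin \{a,b,c,d\}$, Lemma \ref{lem:notinC}(i) applied to the edge $\{a,b\}\in Z$ gives $\varphi\{a,x\}\neq\varphi\{b,x\}$. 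Symmetrically, if the common value equals $\varphi\{b,c\}$, the same argument shows $\{b,c\}$ is critical. Either conclusion contradicts $r(\varphi)=4$, so $\varphi\{a,c\}\neq\varphi\{b,d\}$.

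Thus the two diagonals of $Z$ realise exactly the two alternating colors along the cycle. According to which diagonal takes which color, $Z$ fulfils either \eqref{eq:critcycle1} (when $\varphi\{a,c\}=\varphi\{b,c\}$) or the alternative \eqref{eq:critcyclealt} (when $\varphi\{a,c\}=\varphi\{a,b\}$). In either case $Z$ is a critical cycle for $\varphi$, and $\psi$ is the promised reconstruction.
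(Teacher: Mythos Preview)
Your proof is correct and follows essentially the same route as the paper's: take a minimal reconstruction, identify $D_1(\varphi+\psi)$ as a single $4$-cycle, obtain the alternation of $\varphi$-colors along the cycle, and then split into cases according to whether the diagonals $\{a,c\},\{b,d\}$ receive equal or distinct $\varphi$-colors, ruling out the former by producing a critical pair. The only differences are cosmetic: you derive the alternation directly from Lemma~\ref{lem:Pouzet} rather than via Lemma~\ref{lem:path1}, and you make explicit both the justification that $\Hom(\varphi+\psi)=\Hom_0(\varphi+\psi)$ and the verification of clause~\eqref{eq:critcycle2}, which the paper leaves implicit.
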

\proof
Let $\psi$ be a minimal non-trivial reconstruction of $\varphi$. As in the proof of theorem \ref{teo:SumasMinimales}, the unique component  $C$ of $X$ on $D_1(\varphi+\psi)$) is a cycle of length 4. We will show it is critical for $\varphi$. 

Let $C=\{a,b,c,d\}$ and $D_1(\varphi+\psi)=\{\{a,b\},\{b,c\},\{c,d\},\{d,a\}\}$. By lemma \ref{lem:path1}, we get
\begin{equation}\label{eq:cyclecrit}
    \varphi\{a,b\} = 1-\varphi\{b,c\} = \varphi\{c,d\} = 1-\varphi\{d,a\}.
\end{equation}

We need to check two cases:

\medskip 

\noindent\textbf{Case 1:}  $\varphi\{a,c\}=\varphi\{b,d\}$. We will show that this case does not occur. In fact, 
suppose first that $\varphi\{a,b\}=\varphi\{a,c\}$. Then $\varphi\{a,c\}=1-\varphi\{b,c\}$ and $\varphi\{a,d\}=1-\varphi\{b,d\}$. By part (ii) of lemma \ref{lem:notinC}, we got that $\{a,b\}$ is a critical pair for $\varphi$, i.e. $r(\varphi)=1$, which is impossible.  If $\varphi\{a,b\}=1-\varphi\{a,c\}$, a completely analogous argument shows that $\{b,c\}$ is a critical pair for $\varphi$.

\medskip

\noindent \textbf{Case 2:} $\varphi\{a,c\} \neq\varphi\{b,d\}$. Suppose $\varphi\{a,b\}=\varphi\{b,d\}$, then by \eqref{eq:cyclecrit} we have
\[ 
\begin{alignedat}{3}
    \varphi\{a,c\} &= \varphi\{b,c\} &=1-\varphi\{a,b\}, \\
    \varphi\{b,d\} &= \varphi\{c,d\} &= 1-\varphi\{b,c\},\\
    \varphi\{c,a\} &=\varphi\{d,a\} &= 1-\varphi\{c,d\},
\end{alignedat}
\]
which is precisely equation \eqref{eq:critcycle1} in the definition of a critical cycle. 
The subcase when  $\varphi\{b,c\}=\varphi\{b,d\}$ is completely analogous, it will give equation \eqref{eq:critcyclealt} in remark \ref{rem:Ccrit}, and we again conclude that $D_1(\varphi+\psi)$ is a critical cycle for $\varphi$.
\endproof

\begin{ex}[Pairs of colorings with several components of $D_1(\varphi+\psi)$]

It is worth noting that it is possible to construct examples of colorings $\varphi$ and $\psi$ for which there is more than one component of $D_1(\varphi+\psi)$.

\begin{itemize}
    \item[(i)] The first of the examples was already given in figure \ref{fig:CriticalCycle}. For the colorings shown there $Z=\{a,b,c,d\}$ is a critical cycle but, on the other hand, $\{x_1,x_2\}$ is a critical pair.

    \item[(ii)] This next example (see figure \ref{fig:two-cycles}) consists of a coloring $\varphi$ on 8 vertices such that there are two critical cycles for $\varphi$, $Z=\{a,b,c,d\}$ and $Z_1=\{a_1,b_1,c_1,d_1\}$.
\end{itemize}
    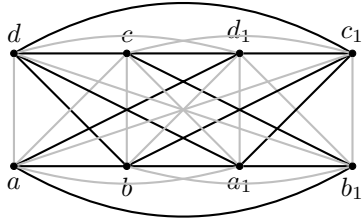
\begin{figure}[h]
     \centering
     \begin{tikzpicture}[scale=1.5]

\node[above] (d) at (0,1) {\footnotesize$d$};
\node[below] (a) at (0,0) {\footnotesize$a$};
\node[below] (b) at (1,0) {\footnotesize$b$};
\node[above] (c) at (1,1) {\footnotesize$c$};

\node[above] (d1) at (2,1) {\footnotesize$d_1$};
\node[below] (a1) at (2,0) {\footnotesize$a_1$};
\node[below] (b1) at (3,0) {\footnotesize$b_1$};
\node[above] (c1) at (3,1) {\footnotesize$c_1$};

\bgroup
\tikzstyle{every node}=[circle, draw, fill=black,inner sep=0pt, minimum width=2.5pt]
\node (a) at (0,0) {};
\node (b) at (1,0) {};
\node (c) at (1,1) {};
\node (d) at (0,1) {};

\node (a1) at (2,0) {};
\node (b1) at (3,0) {};
\node (c1) at (3,1) {};
\node (d1) at (2,1) {};

\draw[thick] (a) -- (b) -- (d) -- (c);
\draw[gray!50, thick] (d) -- (a) -- (c) -- (b);

\draw[thick, gray!50] (a1)-- (d1) -- (b1) -- (c1);
\draw[thick] (b1) -- (a1) -- (c1) -- (d1);

\draw[thick] (c) -- (d1) -- (a) to[out=-30,in=210] (b1) -- (c);
\draw[thick] (b) -- (a1) -- (d) to[out=30,in=150] (c1) -- (b);

\draw[thick, gray!50] (a) -- (c1) to[out=165,in=15] (c) -- (a1) to[out=195,in=-15] (a);
\draw[thick, gray!50] (b) -- (d1) to[out=165,in=15] (d) -- (b1) to[out=195,in=-15] (b);

\egroup

\end{tikzpicture}
     \caption{A coloring with two critical cycles}
     \label{fig:two-cycles}
 \end{figure}
\end{ex}
 
\section{Strongly reconstructible colorings}

The following result from  \cite[Proposition 3.1]{clari-uzca2023} motivates the contents of this section.

\begin{prop}
\label{con-finita}
Let $\varphi$ be a coloring on $X$. If for every $F\in [X]^4$ there is $Y\subseteq X$ such that $F \subseteq Y$ and $\varphi \restriction [Y]^2\in \mathcal{R}$, then $\varphi \in \mathcal{R}$.
\end{prop}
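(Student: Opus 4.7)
The plan is to prove the contrapositive: assuming $\varphi \not\in \mathcal{R}$, I will exhibit a set $F \in [X]^4$ such that for every $Y \subseteq X$ with $F \subseteq Y$, the restriction $\varphi \restriction [Y]^2$ is also not in $\mathcal{R}$. (Implicitly $|X|\geq 4$, otherwise $[X]^4 = \emptyset$ and there is nothing to check.) The witness that fails reconstruction on every such $Y$ will simply be the restriction of a witness that fails reconstruction globally.

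First, from $\varphi \not\in \mathcal{R}$ I pick a non-trivial reconstruction $\psi$, so $\Hom(\varphi) = \Hom(\psi)$ with $\psi \neq \varphi$ and $\psi \neq 1-\varphi$. The inequality $\psi \neq \varphi$ supplies a pair $e_1 = \{a,b\}$ on which $\varphi$ and $\psi$ disagree, while $\psi \neq 1-\varphi$ supplies a pair $e_2 = \{c,d\}$ on which $\varphi$ and $\psi$ agree. These edges are necessarily distinct, so $|e_1 \cup e_2| \in \{3,4\}$, and I enlarge $e_1 \cup e_2$ to a four-element set $F \subseteq X$ by throwing in an extra vertex of $X$ if needed.

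For any $Y$ with $F \subseteq Y \subseteq X$, I take $\psi \restriction [Y]^2$ as the candidate witness that $\varphi \restriction [Y]^2 \not\in \mathcal{R}$. The key descent observation is that $\Hom(\varphi \restriction [Y]^2)$ is precisely the collection of elements of $\Hom(\varphi)$ contained in $Y$, so the hypothesis $\Hom(\varphi) = \Hom(\psi)$ immediately restricts to $\Hom(\varphi \restriction [Y]^2) = \Hom(\psi \restriction [Y]^2)$. Since $e_1, e_2 \subseteq F \subseteq Y$, the edge $e_1$ still witnesses $\psi \restriction [Y]^2 \neq \varphi \restriction [Y]^2$ and $e_2$ still witnesses $\psi \restriction [Y]^2 \neq 1 - \varphi \restriction [Y]^2$. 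Hence $\psi \restriction [Y]^2$ is a non-trivial reconstruction of $\varphi \restriction [Y]^2$, contradicting $\varphi \restriction [Y]^2 \in \mathcal{R}$ and finishing the contrapositive.

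The argument presents no substantive obstacle; it is a direct restriction argument. The only point deserving a small amount of care is that one needs \emph{two} separate witnessing edges, one for each of the two trivial alternatives to rule out, and this is precisely what forces the parameter in the hypothesis to be $4$ rather than, say, $2$ or $3$: the two witnessing edges may occupy up to four distinct vertices, but never more.
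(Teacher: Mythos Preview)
Your argument is correct. The paper does not actually prove this proposition; it is quoted from \cite[Proposition 3.1]{clari-uzca2023} and stated without proof, so there is no in-paper argument to compare against. Your contrapositive approach---choosing one edge in $D(\varphi,\psi)$ and one edge outside it, packaging their endpoints into $F$, and then restricting the global witness $\psi$ to any $Y\supseteq F$---is exactly the natural proof, and your remark that two witnessing edges can occupy at most four vertices is precisely the reason the exponent $4$ appears in the hypothesis.
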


We recall a sufficient condition for being in $\mathcal{R}$ introduced in \cite{clari-uzca2023} which is related to the previous result. A coloring $\varphi$ on $X$ has property $E_i$ if for every finite set $F\subseteq X$ there is $z\in X\setminus F$ such that $\varphi\{x,z\}=i$ for all $x\in F$.  Every coloring with property $E_i$ belongs to $\mathcal{R}$ (see \cite{clari-uzca2023}), precisely, because it satisfies the hypothesis in proposition \ref{con-finita}.  
This inspired the following question.

\begin{question}
\label{que:SR}\cite{clari-uzca2023}
Let $\varphi$ be a coloring on $\N$ with the property $\mathcal{R}$ 
and $F\subseteq \N$ be a finite set. Is there a finite set $G\supseteq F$ such that $\varphi\restriction [G]^2\in \mathcal{R}$?
\end{question}

A coloring  $\varphi$ on $\N$ has the property $\mathcal{SR}$ ({\em strongly reconstructible}) if for every $F\in [\N]^4$ there is finite $G \supseteq F$ such that $\varphi\restriction [G]^2\in \mathcal{R}$. Proposition \ref{con-finita} says that $\mathcal{SR}$ implies $\mathcal{R}$. 
We will provide an example of a coloring that answer question \ref{que:SR} negatively which shows that $\mathcal{SR}$ is strictly stronger than $\mathcal{R}$. 

The next result follows immediately from proposition \ref{con-finita}.

\begin{prop}\label{eq:EiSR}
Let $\varphi$ be a coloring on $X$. If $\varphi$ has the property $E_i$ for some $i\in \{0,1\}$ then $\varphi$ has the property $\mathcal{SR}$.
\end{prop}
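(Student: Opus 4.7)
The plan is to extract a finite $G \supseteq F$ for which the sufficient condition for reconstructibility of \cite[Prop.~3.5]{clari-uzca2023} (quoted in the introduction) holds verbatim on $[G]^2$; then $\varphi \restriction [G]^2 \in \mathcal{R}$, which is exactly what $\mathcal{SR}$ requires. The construction uses property $E_i$ iteratively to append a large $i$-homogeneous fan to $F$.

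Given $F \in [X]^4$, I would inductively choose $z_1, z_2, \dots, z_6 \in X$ as follows. Having chosen $z_1, \dots, z_{k-1}$, apply property $E_i$ to the finite set $F \cup \{z_1, \dots, z_{k-1}\}$ to obtain $z_k \in X \setminus (F \cup \{z_1, \dots, z_{k-1}\})$ with $\varphi\{z_k, y\} = i$ for every $y \in F \cup \{z_1, \dots, z_{k-1}\}$. Set $G = F \cup \{z_1, \dots, z_6\}$. By construction, every pair inside $\{z_1, \dots, z_6\}$, and every pair with one endpoint in $\{z_1, \dots, z_6\}$ and the other in $F$, has color $i$.

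To verify that $\varphi \restriction [G]^2 \in \mathcal{R}$, I would check the hypothesis of \cite[Prop.~3.5]{clari-uzca2023}: for every $F' \in [G]^4$, since $|\{z_1, \dots, z_6\}| = 6 > 4 = |F'|$, there exist indices $j \neq \ell$ with $z_j, z_\ell \in G \setminus F'$. Put $x = z_j$ and $y = z_\ell$. Every $z \in F'$ is distinct from $z_j$ and $z_\ell$, so the construction yields $\varphi\{x, z\} = \varphi\{y, z\} = \varphi\{x, y\} = i$. The sufficient condition applies, so $\varphi \restriction [G]^2 \in \mathcal{R}$, and since $G$ is finite with $F \subseteq G$, $\varphi$ has property $\mathcal{SR}$.

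No step presents a serious obstacle. The only design choice is the number of new vertices appended: six is the cleanest value that guarantees, independently of how an arbitrary $4$-subset of $G$ is distributed between $F$ and $\{z_1, \dots, z_6\}$, that at least two of the $z_k$'s always survive outside to serve as the witnesses $x, y$.
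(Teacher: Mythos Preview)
Your argument is correct and is essentially a fleshed-out version of what the paper leaves implicit: the paper merely asserts that the result ``follows immediately from proposition \ref{con-finita}'', relying on the remark just before that $E_i$ was already shown in \cite{clari-uzca2023} to satisfy the hypothesis of that proposition with finite witnesses, which is precisely $\mathcal{SR}$. Your explicit construction of $G=F\cup\{z_1,\dots,z_6\}$ and verification of the hypothesis of \cite[Prop.~3.5]{clari-uzca2023} on $[G]^2$ make the same idea concrete, so there is no genuine difference in approach.
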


Our next result provides a characterization of $\neg\mathcal{R}$.

\begin{teo}
\label{prop:RnotSRcol}
Let $\varphi$ be a coloring on $X$ where $|X|\geq R(7)$. The following statements are equivalent:
\begin{itemize}
    \item[(a)] $\varphi \notin \mathcal{R}$.
    \item[(b)] there is $F\in [X]^4$  such that $\varphi\restriction [F]^2$ has a non-trivial reconstruction $\psi_F$  and for every finite subset $G$ of $X$ with $G\supseteq F$ there exists a non-trivial reconstruction $\psi_G$ for $\varphi\restriction [G]^2$ such that 
$$
D_1(\varphi\restriction [G]^2 +\psi_G)=D_1(\varphi\restriction [F]^2+\psi_F)\neq \emptyset.
$$
   \item[(c)] there is $F\in [X]^4$  such that $\varphi\restriction [F]^2$ has a non-trivial reconstruction $\psi_F$  and for every $G\in[X]^7$  with $G\supseteq F$ there exists a non-trivial reconstruction $\psi_G$ for $\varphi\restriction [G]^2$ such that 
$$
D_1(\varphi\restriction [G]^2 +\psi_G)=D_1(\varphi\restriction [F]^2+\psi_F)\neq \emptyset.
$$
\end{itemize}
\end{teo}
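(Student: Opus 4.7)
The implication $(b)\Rightarrow (c)$ is immediate, since every $G\in [X]^7$ with $G\supseteq F$ is in particular a finite superset of $F$. The remaining two implications require more work, and I will sketch them in turn.

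For $(a)\Rightarrow (b)$, the plan is to split according to the value of $r(\varphi)$. By theorem \ref{teo:SumasMinimales}, either $r(\varphi)=1$ or $r(\varphi)=4$. If $r(\varphi)=1$, lemma \ref{lem:notinC}(iv) gives a critical pair $\{a,b\}$ for $\varphi$; I would fix any $F\in [X]^4$ containing $\{a,b\}$, let $\psi_F$ flip the single edge $\{a,b\}$ in $\varphi\restriction [F]^2$, and for every finite $G\supseteq F$ let $\psi_G$ flip the same edge in $\varphi\restriction [G]^2$. Since $\{a,b\}$ remains critical for each $\varphi\restriction [G]^2$, remark \ref{rem-critical-pair} gives $\varphi\restriction [G]^2\; H\;\psi_G$, and by construction $D_1(\varphi\restriction [G]^2+\psi_G)=\{\{a,b\}\}=D_1(\varphi\restriction [F]^2+\psi_F)$. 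If $r(\varphi)=4$, theorem \ref{teo:SumasMinimales2} supplies a critical cycle $Z$ on a set $F=\{a,b,c,d\}\in [X]^4$; I would take $\psi_G$ to be the coloring obtained from $\varphi\restriction [G]^2$ by flipping the four edges of $Z$. Equations \eqref{eq:critcycle1} depend only on $F$ and \eqref{eq:critcycle2} is inherited from $\varphi$, so proposition \ref{prop:critcycle} applied to $\varphi\restriction [G]^2$ shows $\psi_G$ is a non-trivial reconstruction, with $D_1(\varphi\restriction [G]^2+\psi_G)=Z$ independent of $G$.

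The main work lies in $(c)\Rightarrow (a)$. Let $F$ and $\psi_F$ be as in (c) and write $D=D_1(\varphi\restriction [F]^2+\psi_F)\subseteq [F]^2$. I would define a global candidate $\psi$ on $X$ by setting $\psi\{x,y\}=1-\varphi\{x,y\}$ when $\{x,y\}\in D$ and $\psi\{x,y\}=\varphi\{x,y\}$ otherwise, and then verify $\varphi H\psi$ via the characterization $\Hom(\varphi)\cap [X]^3=\Hom(\psi)\cap [X]^3$. For each $T\in [X]^3$ I would split on the size of $T\cap F$: if $T\subseteq F$ the equivalence is inherited from $\psi_F$; if $|T\cap F|\leq 1$ then no pair in $[T]^2$ lies in $[F]^2\supseteq D$, so $\psi$ and $\varphi$ agree on $[T]^2$. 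The delicate case is $|T\cap F|=2$: here $|F\cup T|\leq 5$, so using $|X|\geq R(7)\geq 7$ I would extend $F\cup T$ to some $G'\in [X]^7$, obtain $\psi_{G'}$ from (c), observe that $\psi_{G'}$ flips exactly the edges of $D$ and hence coincides with $\psi\restriction [G']^2$, and then transfer the equivalence on $T$ from the reconstruction relation $\varphi\restriction [G']^2\; H\; \psi_{G'}$. Finally $\psi\neq\varphi$ since $D\neq\emptyset$, and $\psi\neq 1-\varphi$ since $|[X]^2|\geq\binom{7}{2}=21>6\geq |D|$.

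The main obstacle is precisely the case $|T\cap F|=2$ in $(c)\Rightarrow (a)$: the candidate $\psi$ is a purely local perturbation of $\varphi$ inside $F$, and one needs (c) to certify that flipping these edges does not create or destroy homogeneous triples straddling $F$. The role of the bound $|F\cup T|\leq 5<7$ is exactly to allow the enlargement to a $7$-element witness, which explains why requiring the hypothesis only on $G\in [X]^7$ (rather than on all finite $G\supseteq F$) is already enough to recover global non-reconstructibility.
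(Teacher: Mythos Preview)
Your proposal is correct and follows essentially the same approach as the paper. The only cosmetic differences are that in $(a)\Rightarrow(b)$ the paper argues uniformly by taking $\psi_G=\psi\restriction[G]^2$ for a single minimal reconstruction $\psi$ (rather than splitting on $r(\varphi)$), and in $(c)\Rightarrow(a)$ the paper does not separate the cases $T\subseteq F$ and $|T\cap F|\le 1$ but simply enlarges $F\cup T$ to some $G'\in[X]^7$ for every triple $T$; the key idea---define $\psi$ globally by flipping exactly the edges of $D$ and transfer the triple-by-triple equivalence from the $7$-element witnesses---is identical.
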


\proof
    \begin{description}
        \item[(a) $\implies$ (b)] Let $\psi$ be a minimal non-trivial reconstruction of $\varphi$. Then $D_1(\varphi+\psi)$ is either a critical pair or critical cycle as shown by theorems \ref{teo:SumasMinimales} and \ref{teo:SumasMinimales2}. Let $C$ be the unique component of $D_1(\varphi+\psi)$, thus $|C|\leq 4$. Let $F\in [X]^4$ be such that $C\subseteq F$. Then for every finite $G\supseteq F$ put $\psi_G = \psi\restriction [G]^2$. Clearly, the conclusion holds.

        \item[(b) $\implies$ (c)] This is immediate.

        \item[(c) $\implies$ (a)]  Let  $F\in [X]^4$ and $\psi_F$ be a non-trivial reconstruction of $\varphi\restriction [F]^2$ such that for every finite set  $G$ containing  $F$ there exists a non-trivial reconstruction $\psi_G$ of $\varphi\restriction [G]^2$ such that  
 $$
 D_1(\varphi\restriction [G]^2 +\psi_G)=D_1(\varphi\restriction [F]^2 + \psi_F)\neq \emptyset.
 $$
 The last inequality is because these reconstructions are non-trivial. 
 
 Consider the coloring $\psi:[X]^2 \to \{0,1\}$ defined by
 \[ 
 \psi\{x,y\} = \begin{cases}
     1-\varphi\{x,y\}, &\text{ if } \{x,y\} \in D_1(\varphi\restriction [F]^2+\psi_F)\\
     \varphi\{x,y\} &\text{ if } \{x,y\} \notin D_1(\varphi\restriction [F]^2+\psi_F).
 \end{cases}
 \]
We will show that $\psi$ is a non trivial reconstruction of $\varphi$. 
We need some  facts for the proof.

\begin{itemize}
\item[(i)] For any $G\in [X]^7$ with  $G\supseteq F$, we have 
$$
D_1(\varphi+\psi)=D_1(\varphi\restriction [G]^2+\psi_G)\neq \emptyset.
$$
This holds by the definition of $\psi$ and the hypothesis that $D_1(\varphi\restriction [G]^2 +\psi_G)=D_1(\varphi\restriction [F]^2+\psi_F)\neq \emptyset$. 

\item[(ii)] We have that $\psi\restriction [G]^2 = \psi_G$, otherwise there would be an edge $\{x,y\}\in [G]^2$ such that $\{x,y\} \in D_1(\varphi+\psi) \Delta D_1(\varphi\restriction [G]^2 +\psi_G)$, which is impossible.

\item[(iii)] We claim that  $\varphi H\psi$. Indeed, let $\{x,y,z\}\in \Hom(\varphi)$ and   $G=F\cup\{x,y,z\}$, so $\{x,y,z\}\in \Hom(\varphi\restriction [G]^2)$. If $G\notin [X]^7$ then replace it by any $G'\in [X]^7$ such that $G\subseteq G'$. By hypothesis, this means $\{x,y,z\}\in \Hom(\psi_G)$, and by (ii), this in turn implies that $\{x,y,z\} \in \Hom(\psi)$. On the other hand, if $\{x,y,z\}\in \Hom(\psi)$, then a completely analogous argument shows that $\{x,y,z\} \in \Hom(\varphi)$.
\end{itemize}
\bigskip 

As $D_1(\varphi+\psi)\neq \emptyset$ and $\varphi H\psi$, we conclude that  $\psi$ is a non trivial reconstruction of $\varphi$. Thus $\varphi\not\in \mathcal{R}$.
 \end{description}
\endproof

\begin{cor}
Let $\varphi$ be a coloring on $X$  where $|X|\geq R(7)$. Then $\varphi\in \mathcal{R}$ iff for every $F\in [X]^4$ one of the following holds
\begin{itemize}
\item[(i)] $\varphi\restriction F\in \mathcal{R}$.
    
\item[(ii)] $\varphi\restriction F\not\in \mathcal{R}$ and  there is  $G\supseteq F$ with $|G|\leq 7$ such that: Either  $\varphi\restriction [G]^2\in \mathcal{R}$ or  for all non-trivial reconstruction $\psi_F$ of $\varphi\restriction F$, there is a non-trivial reconstruction $\psi_G$  of $\varphi\restriction [G]^2$ such that 
$D_1(\varphi\restriction [G]^2 +\psi_G)\neq D_1(\varphi\restriction [F]^2+\psi_F)$.

\end{itemize}
\end{cor}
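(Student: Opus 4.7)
The plan is to derive the corollary directly from Theorem~\ref{prop:RnotSRcol} by contraposing the equivalence $(a)\iff(c)$. Since $\varphi\in\mathcal{R}$ is equivalent to the negation of~(c), this negation says: for every $F\in[X]^4$ and every non-trivial reconstruction $\psi_F$ of $\varphi\restriction[F]^2$, there exists $G\in[X]^7$ with $G\supseteq F$ such that no non-trivial reconstruction $\psi_G$ of $\varphi\restriction[G]^2$ satisfies $D_1(\varphi\restriction[G]^2+\psi_G)=D_1(\varphi\restriction[F]^2+\psi_F)$. The goal is to show that this is equivalent to: for each $F\in[X]^4$ either (i) or (ii) holds.

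For the forward direction, fix $F\in[X]^4$. If $\varphi\restriction F\in\mathcal{R}$, case (i) holds. Otherwise choose any non-trivial reconstruction $\psi_F$ of $\varphi\restriction F$ and let $G\in[X]^7$ with $G\supseteq F$ be the witness from the negated~(c). Split: if $\varphi\restriction[G]^2\in\mathcal{R}$, we land in the first disjunct of~(ii); otherwise, $\varphi\restriction[G]^2$ admits non-trivial reconstructions, and each of them must have $D_1$ different from $D_1(\varphi\restriction F+\psi_F)$, so any such reconstruction witnesses the second disjunct of~(ii).

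For the backward direction, assume (i) or (ii) holds for every $F$ and suppose, for contradiction, that $\varphi\notin\mathcal{R}$. By Theorem~\ref{prop:RnotSRcol}, fix a distinguished $F_0\in[X]^4$ with non-trivial reconstruction $\psi_{F_0}$ of $\varphi\restriction F_0$ such that every $G\in[X]^7$ with $G\supseteq F_0$ carries a non-trivial reconstruction $\psi_G$ of $\varphi\restriction[G]^2$ with $D_1(\varphi\restriction[G]^2+\psi_G)=D_1(\varphi\restriction F_0+\psi_{F_0})$. Since $\psi_{F_0}$ exists, (i) fails for $F_0$ and (ii) supplies $G_0\supseteq F_0$ with $|G_0|\leq 7$ satisfying one of the two disjuncts. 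In either case, extend $G_0$ to some $G'\in[X]^7$ with $G'\supseteq G_0$ and invoke Theorem~\ref{prop:RnotSRcol} to obtain a non-trivial reconstruction $\psi_{G'}$ of $\varphi\restriction[G']^2$ with $D_1(\psi_{G'})=D_1(\psi_{F_0})\subseteq[F_0]^2\subseteq[G_0]^2$. The restriction $\psi_{G'}\restriction[G_0]^2$ is a reconstruction of $\varphi\restriction G_0$ whose $D_1$-set equals $D_1(\psi_{F_0})$, which is non-empty and strictly contained in $[G_0]^2$ (by cardinality: $|D_1(\psi_{F_0})|\leq 6$, while $|[G_0]^2|\geq 6$ with the strict inclusion guaranteed since $\psi_{F_0}\neq 1-\varphi\restriction F_0$). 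Hence $\psi_{G'}\restriction[G_0]^2$ is a non-trivial reconstruction of $\varphi\restriction G_0$ with $D_1$-pattern matching $D_1(\psi_{F_0})$. This immediately contradicts the first disjunct $\varphi\restriction G_0\in\mathcal{R}$, and similarly contradicts the blocking condition of the second disjunct applied with $\psi_F=\psi_{F_0}$.

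The main obstacle is the delicate quantifier bookkeeping: the theorem provides a $G$ depending on $\psi_F$ together with a universal condition over $\psi_G$, while~(ii) writes $\exists G$ outside the quantifier on $\psi_F$. This is reconciled by using the distinguished $\psi_{F_0}$ furnished by Theorem~\ref{prop:RnotSRcol} as the critical witness and relying on the restriction lemma $\Hom(\psi_{G'})\cap[G_0]^3=\Hom(\varphi\restriction G')\cap[G_0]^3=\Hom(\varphi\restriction G_0)\cap[G_0]^3$ to transfer non-trivial reconstructions downward while preserving $D_1$.
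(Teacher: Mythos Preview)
Your backward direction contains a genuine error. The second disjunct of~(ii) asserts: for every non-trivial reconstruction $\psi_F$ of $\varphi\restriction F_0$ there \emph{exists} a non-trivial reconstruction $\psi_G$ of $\varphi\restriction G_0$ with $D_1(\varphi\restriction G_0+\psi_G)\neq D_1(\varphi\restriction F_0+\psi_F)$. To contradict this at $\psi_F=\psi_{F_0}$ you would have to show that \emph{every} non-trivial $\psi_G$ has $D_1$-set equal to $D_1(\varphi\restriction F_0+\psi_{F_0})$. Instead you only exhibit one particular reconstruction, namely $\psi_{G'}\restriction G_0$, whose $D_1$-set matches; nothing prevents some other non-trivial reconstruction of $\varphi\restriction G_0$ from having a different $D_1$-set, and that is all the disjunct needs.

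In fact such a witness always exists. If $\psi_G$ is any non-trivial reconstruction of $\varphi\restriction G_0$ then so is $1-\psi_G$, and
\[
D_1\bigl(\varphi\restriction G_0+(1-\psi_G)\bigr)=[G_0]^2\setminus D_1(\varphi\restriction G_0+\psi_G),
\]
so these two $D_1$-sets are complementary, hence distinct. For any target set $S\subseteq [G_0]^2$, at least one of them differs from $S$. Consequently the second disjunct of~(ii) is satisfied by \emph{every} $G$ with $\varphi\restriction G\notin\mathcal{R}$, in particular by $G=F$ itself. This shows that (i)-or-(ii) holds for every $F\in[X]^4$ regardless of whether $\varphi\in\mathcal{R}$, so the backward implication of the corollary, read literally, is false and your argument cannot be repaired. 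Comparing with condition~(c) of Theorem~\ref{prop:RnotSRcol}, the clause ``there is a non-trivial reconstruction $\psi_G$'' in~(ii) is evidently meant to read ``for all non-trivial reconstructions $\psi_G$''; with that correction the corollary becomes the straight contrapositive of $(a)\Leftrightarrow(c)$, and your overall strategy goes through (your forward-direction argument already establishes the corrected, universal statement over $\psi_G$, not merely the existential one).
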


\begin{rem}The hypothesis in theorem \ref{prop:RnotSRcol}, that there is 
a coloring $\psi_G$ on $G$ such that $\varphi\restriction [G]^2 H \psi_G$ and $D_1(\varphi\restriction [G]^2 +\psi_G)=D_1(\varphi\restriction [F]^2+\psi_F)$, is stronger than just requiring that $\psi_G$ is a non-trivial reconstruction of $\varphi\restriction [G]^2$; as we illustrate in the example that follows.
\end{rem}

\begin{ex}
\label{ex:alpha}
{\em 
We present a coloring $\alpha\in \mathcal{R}$ which is not $\mathcal{SR}$, in particular, it does not have property $E_i$, for $i=0,1$.  

Let $\alpha$ be the  coloring on $\mathbb{N}$ defined by:
    \begin{equation}\label{eq:alpha}
        \begin{alignedat}{2}
            \alpha\{0,1\}&= \alpha\{0,2\} = 1-\alpha\{1,2\},\\
            \alpha\{n,n+1\} &= \alpha\{0,n+1\} = 1-\alpha\{0,n\} \text{ for every $n\ge 2$},\\
            \alpha\{k,n\} &=1-\alpha\{0,k\} \text{ for every $1\leq k <n$}.
        \end{alignedat}
    \end{equation}

    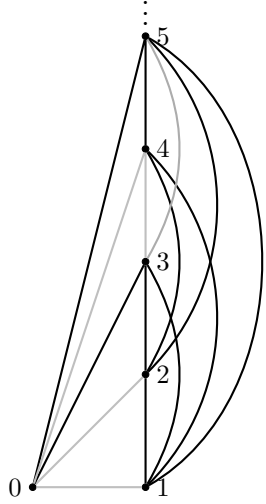
\begin{figure}[h]
        \centering        \begin{tikzpicture}[scale=1.5]
\node[left] (0) at (0,0) {\footnotesize$0$};
\node[right] (1) at (1,0) {\footnotesize$1$};
\node[right] (2) at (1,1) {\footnotesize$2$};
\node[right] (3) at (1,2) {\footnotesize$3$};
\node[right] (4) at (1,3) {\footnotesize $4$};
\node[right] (5) at (1,4) {\footnotesize $5$};
\node[above] (l) at (1,4) {\small $\vdots$};
\bgroup
\tikzstyle{every node}=[circle, draw, fill=black,inner sep=0pt, minimum width=2.5pt]

\node (0) at (0,0) {};
\node (1) at (1,0) {};
\node (2) at (1,1) {};
\node (3) at (1,2) {};
\node (4) at (1,3) {};
\node (5) at (1,4) {};

\draw[gray!50,thick] (2) -- (0) -- (1);
\draw[thick] (1) -- (2);
\draw[thick] (2) -- (3) -- (0) --(5) -- (4);
\draw[thick, gray!50] (0) -- (4) -- (3);
\draw[thick] (1) to[out=60, in=-60] (3);

\draw[thick] (1) to[out=45, in=-45] (4);

\draw[thick] (1) to[out=30, in=-30] (5);

\draw[thick] (2) to[out=60, in=-60] (4);

\draw[thick] (2) to[out=45, in=-45] (5);

\draw[thick, gray!65] (3) to[out=60, in=-60] (5);

\egroup

\end{tikzpicture}
        \caption{It shows the restriction $\alpha \restriction [\{0,1,2,3,4,5\}]^2$. The edges $\{k,n\}$ in the third equation are not shown, they are all the same color as $\{k,k+1\}$.} 
        \label{fig:enter-label}
    \end{figure}

\noindent Observe that, for any $n$, $\{0,n+2\}$ is a critical pair for the restriction of $\alpha$ to $\{0, 1,\ldots, n+1\}$ but it is no longer a critical pair on $\{0, 1,\ldots, n+2\}$. 

To see that $\alpha\not\in \mathcal{R}$, it suffices to show that $\alpha$ has neither critical pair nor critical cycles. 

First note that any triple $\{1,2,z\}\in [\N]^3$ is a homogeneous set for $\alpha$. By the first and third equation in \eqref{eq:alpha}, we have 
\[\begin{alignedat}{3}
    \alpha\{1,2\} &= 1-\alpha\{0,1\} &= \alpha\{1,n\} \text{ for every $n>2$,}\\
    &=1-\alpha\{0,2\} &= \alpha\{2,n\} \text{ for every $n> 2$.}
\end{alignedat}\]
Thus,  $\{1,2\}$, $\{1,n\}$ and $\{2,n\}$,  for all $n>2$, are neither  critical pairs nor belongs to a critical cycle for $\alpha$. 

For any $\{x,y\}\in [X]^2$ define the set 
\[
B_{\{x,y\}} = \{ z\in X\setminus\{x,y\}\,:\, \alpha\{x,z\}=\alpha\{y,z\} \}.
\]
We have observed after equation \eqref{eq:criticos} that $|B_{\{x,y\}}|\leq 1$ whenever $\{x,y\}$ is either a critical pair or it belongs to a critical cycle for $\alpha$. 
We will show that $|B_{\{x,y\}}|>1$ for every $\{x,y\}\in [\N]^2$ such that $\{x,y\}\cap\{1,2\}=\emptyset$ and for $\{x,y\}$ equal to $\{0,1\}$  or $\{0,2\}$. Thus all those pairs are neither critical nor belong to a critical cycle. 

Let  $\{x,y\}\in [\N \setminus \{1,2\}]^2$, we consider two cases. 

\medskip 

\noindent \textbf{Case 1:} If $x=0$ then it is enough to note that, by application of the second equation in \eqref{eq:alpha}, $y+1$ and $y+3$ are elements of $B_{\{x,y\}}$. So $|B_{\{x,y\}}|>1$.

\bigskip 

\noindent\textbf{Case 2:} Suppose $x\neq 0$. Since both $x$ and $y$ are greater than 2, by the third equation in \eqref{eq:alpha} we have $1,2\in B_{\{x,y\}}$. So again $|B_{\{x,y\}}|>1$.

\bigskip 

Finally, we show that $\alpha$ does not have the property $E_i$ for any $i\in \{0,1\}$. 
Let $F=\{1,2,3\}$. By the second equation $\alpha\{0,3\}=1-\alpha\{0,2\}$. Then the third equation in \eqref{eq:alpha} gives that for every $n>3$, $\alpha\{3,n\}=1-\alpha\{2,n\}$. We have proven that for every $n \in \N\setminus\{1,2,3\}$, $\alpha\{3,n\}\neq\alpha\{2,n\}$. 
}
\end{ex}

\end{document}